\def\a{{\mathbf a}}
\def\x{{\mathbf x}}
\def\u{{\mathbf u}}
\def\v{{\mathbf v}}
\def\w{{\mathbf w}}
\def\n{{\mathbf n}}
\def\f{\frac}
\def\q{\quad}
\def\na{\nabla}
\def\Om{\Omega}
\def\om{\omega}
\def\p{\partial}
\def\eref#1{{(\ref{#1})}}
\newtheorem{theorem}{Theorem}[section]
\newtheorem{lemma}[theorem]{Lemma}
\newtheorem{proposition}[theorem]{Proposition}
\begin{document}
\title[]{Viscoelastic modulus reconstruction using time harmonic vibrations}
\author{Habib Ammari$^1$,  Jin Keun Seo$^2$ and Liangdong Zhou$^2$}
\address{$^1$ Department of Mathematics and Applications,
Ecole Normale Sup\'erieure, 45 Rue d'Ulm, 75005 Paris, France .}

\address{$^2$ Department of Computational Science and Engineering, Yonsei University, 120-749 Korea.}
\ead{habib.ammari@ens.fr, seoj@yonsei.ac.kr, zhould1990@hotmail.com}

\begin{abstract}
This paper presents a new iterative reconstruction method to provide high-resolution images of shear modulus and viscosity via the internal measurement of displacement fields in tissues.  To solve the inverse problem, we compute the Fr\'echet derivatives of the least-squares discrepancy functional  with respect to the shear modulus and shear viscosity. The proposed iterative reconstruction method using this Fr\'echet derivative does not require any differentiation of the displacement data for the full isotropic linearly viscoelastic model, whereas the standard reconstruction methods require at least double differentiation.  Because  the minimization problem is ill-posed and highly nonlinear, this adjoint-based optimization method needs a very well-matched initial guess.  We find a good initial guess. For a well-matched initial guess,  numerical experiments show that the proposed method considerably  improves the quality of the reconstructed viscoelastic images.
\end{abstract}

\section{Introduction}
Elastography \cite{Muthupillai1995} aims to provide a quantitative visualization of the mechanical properties of human tissues by using the relation between the wave propagation velocity and the mechanical properties of the tissues. During the last three decades, elastography led to significant improvements in the quantitative evaluation of tissue stiffness. The two major elastographic techniques are based on ultrasound \cite{Bercoff2003,Parker1990,Parker1992,Sandrin2002,Wu2006} and on magnetic resonance imaging  \cite{Manduca2001,Manduca2003, Miga2001,Muthupillai1995,
Oliphant2001,Park2006,Romano2000,Sack2007,Sinkus2005b,Sinkus2000}. GE Healthcare has recently  commercialized magnetic resonance elastography (MRE). Its main use is to assess mechanical changes in liver tissue.
The mechanical  properties of tissue include the shear modulus, shear viscosity, and compression modulus \cite{Landau1959}. Quantification of  the tissue shear modulus {\it in vivo} can provide evidence of the manifestation of tissue diseases.
For centuries, palpation has been widely used to identify tissue abnormalities and estimate  the mechanical properties of tissue. Therefore, it  is surprising  that  the concept of remote palpation, which is the remote imaging of tissue stiffness, was first developed only  in the late 1980s \cite{Hill2004,Krouskop1987,Sarvazyan2011}.

Although significant progress has been made in the development of  shear modulus imaging technology, problems remain image quality relating to the enhancement of images of local tissue shear viscosity and shear modulus \cite{Jiang2011,Kwon2009,Lee2010,McLaughlin2012,Sinkus2005a,Sinkus2005b,Song2012,Wall2011}. This paper focuses on the image reconstruction methods for tissue viscoelasticity imaging. To simplify the underlying inverse problem, the reconstruction of both the shear modulus and shear viscosity are considered under the assumption of isotropic elastic moduli.

This work considers the inverse problem of recovering the distribution of the shear modulus ($\mu$) and shear viscosity ($\eta_{\mu}$) from the internal measurement of the time-harmonic mechanical displacement field $\u$  produced by the application of  an external time harmonic excitation at frequency $\om/2\pi$ in the range $50\sim 200$Hz  through the surface of the subject. Modeling soft tissue as being linearly viscoelastic and nearly incompressible, the displacement $\u$ satisfies the elasticity equation
 \begin{equation}\label{Eq:viscoelasticity}
\na\cdot\left((\mu+i\omega\eta_{\mu})(\na\u+\na\u^t)\right)+\na((\lambda+i\omega\eta_{\lambda})\na\cdot\u)+\rho\omega^2\u=0, \end{equation}
where $\rho$ denotes the density of the medium, $\na\u^{t}$ is the transpose of the matrix $\na\u$, $\lambda$ is the compression modulus and $\eta_{\lambda}$  is the compression viscosity.

The most widely used reconstruction method is the algebraic inversion method \cite{Manduca2001}: For any non-zero constant vector $\a$,
\begin{equation}\label{Eq:directinversion}
 \mu+i\omega\eta_{\mu}=-\f{\rho\omega^2 (\a\cdot \u)}{\na\cdot\na (\a\cdot\u)},
 \end{equation}
 which requires the strong assumptions of $\na (\mu+i\omega\eta_{\mu})\approx 0$ (local homogeneity) and $(\lambda+i\omega\eta_{\lambda})\na\cdot\u\approx0$ (negligible pressure).

  The algebraic formula (\ref{Eq:directinversion})  ignores  reflection effects of the propagating wave due to  abrupt changes of $\mu+i\om\eta_{\mu}$, so that the method cannot measure any change of $\mu+i\om\eta_{\mu}$ in the direction of $\a$ \cite{Kwon2009,Seo2012}.

To deal with these fundamental drawbacks in the algebraic inversion method, the shear modulus decomposition algorithm based on Helmholtz-Hodge decomposition was developed in \cite{Kwon2009}. This is a much better performing method;  however, it continues to
neglect pressure by using $(\lambda+i\omega\eta_{\lambda})\na\cdot\u=0$, and is thus not realistic.  In \cite{Sinkus2005a,Sinkus2005b}, the curl operator is applied to the elasticity equation (\ref{Eq:viscoelasticity})  to eliminate the troublesome term ($\na\times\na((\lambda+i\omega\eta_{\lambda})\na\cdot\u)=0$). The reconstruction method in \cite{Sinkus2005a,Sinkus2005b} requires third-order derivatives of the noisy data $\u$, making it  very sensitive to noise in the data. A realistic model must  take into account the non-vanishing pressure $p$ \cite{Kozhevnikov1996,Ammari2008}, which can be defined roughly as $p:=\lim_{\lambda\to\infty,~ \na\cdot\u\to 0} ~ (\lambda+i\omega\eta_{\lambda})\na\cdot\u$.

The shear viscoelasticity reconstruction method proposed in this paper is based on the full elasticity model. It does not require any derivative of $\u$. The minimization of a misfit functional involving the discrepancy between the measured and fitted data is considered.
The Fr\'echet derivatives of the functional with respect to $\mu$ and $\eta_{\mu}$ are then computed by introducing an adjoint problem. This Fr\'echet derivatives  based-iterative scheme requires  a well-matched initial guess, because the minimization problem is highly nonlinear and may have multiple local minima. We find a well-matched initial guess that captures the edges of the image of the shear viscoelasticity.

The numerical results presented herein demonstrate
the viability and efficiency of the proposed minimization method.

\section{Reconstruction methods}\label{section-reconstruction}

\subsection{Viscoelastic model}
Let an elastic subject occupy the  smooth domain $\Omega\subset\mathbb{R}^d, d=2,3$  with boundary $\p\Omega$.
To evaluate the viscoelastic tissue properties, we create an internal time-harmonic displacement in the tissue by applying a time-harmonic excitation through the surface of the object.
Under the assumptions of mechanical isotropy and incompressibility in the tissue,  the induced time-harmonic displacement at angular frequency $\omega$, denoted by $\u$, is then governed by the full elasticity equation
\begin{equation}\label{timeharmonic} 2\na\cdot\left((\mu+i\omega\eta_{\mu})\na^s\u\right)+\na((\lambda+i\omega\eta_{\lambda})\na\cdot\u)+\rho\omega^2\u=0 \quad \hbox{in}\,\,\Omega,
\end{equation}
where $\na^s\u=\f{1}{2}(\na\u+\na\u^{t})$ is the strain tensor with $\na\u^{t}$ denoting the transpose of the matrix $\na\u$; $\rho$ is the density of the medium; the complex quantity $\mu+i\omega\eta_{\mu}$ is the shear modulus, with $\mu$ indicating the storage modulus and   $\eta_{\mu}$ indicating the loss modulus reflecting the attenuation of a viscoelastic medium; $\lambda$ and $\eta_{\lambda}$ are the compression modulus and compression viscosity, respectively. We assume that these heterogeneous parameters satisfy \cite{Landau1959}:
\begin{equation*}
\mu>0,\quad \eta_{\mu} > 0,\quad \eta_{\lambda} > 0,\quad d\lambda+2\mu>0.
\end{equation*}\par
We define  the interior domain $\Om'$  and the neighborhood $\mathcal{E}$ of the boundary, $\p\Om$, as
$$\Om':=\{x\in\Om | \mathrm{dist}(x,\p\Om)>\epsilon \, \, \hbox{ with}\,\, \epsilon>0\},\quad \mathcal{E}:=\Om\backslash \overline{\Om'}.$$
We assume that $\mu$ and $\eta_{\mu}$ are known in the region $\mathcal{E}$, and are denoted by $\mu_0$ and $\eta_{{\mu}_0}$, respectively.
We denote by $H^s$ the standard Sobolev space of order $s$ and by $H^s_0$ the closure of $\mathcal{C}^\infty_0$, which is the set of $\mathcal{C}^\infty$ compactly supported functions,  in the $H^s$-norm.

Let $$\widetilde S=\{(\mu, \eta_{\mu}):=(\mu_0, \eta_{{\mu}_0})+(\phi_1,\phi_2)| (\phi_1, \phi_2)\in S\}$$
and
$$\begin{array}{l}
S=\{(\phi_1, \phi_2)\in [H^2_0(\Om)]^2 | c_1<\phi_1+\mu_0<c_2, \\ \hspace{4cm}c_1<\phi_2+\eta_{{\mu}_0}<c_2,\mbox{supp}(\phi_1,\phi_2)\subset\Om'\}.
\end{array}
$$

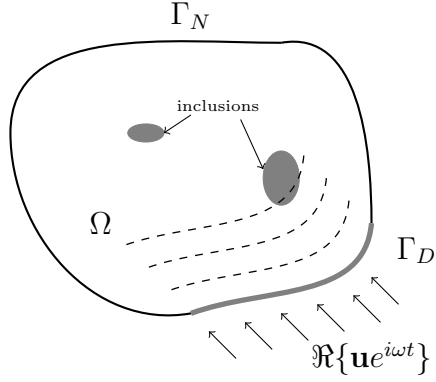
\begin{figure}[h!]
\centering
\begin{tikzpicture}[scale=1.2]
  \draw[thick] (0,0)to[out=-270,in=180](3,1);
  \draw[thick](3,1)to[out=10,in=90](4,-1);
  \draw[thick](2,-2)to[out=-170,in=270](0,0);
  \draw[line width=2pt,gray](4,-1)to[out=-95,in=20](2,-2);
  \draw[draw=gray,fill=gray] (1.5,0)ellipse(0.2 and 0.1);
  \draw[draw=gray,fill=gray] (3,-0.5)ellipse(0.2 and 0.3);
  \node at(2.3,0.3){\tiny inclusions};
  \draw[->](2,0.2)--(1.7,0);
  \draw[->](2.55,0.15)--(2.8,-0.4);
  \node at(1,-1){$\Omega$};
  \node at(4.5,-1.3){$\Gamma_D$};
  \node at(4,-2.5){\color{black}$\Re\{\u e^{i\om t}\}$};
  \node at(2,1.3){$\Gamma_N$};
  \draw[->](2.5,-2.5)--(2.2,-2.2) ;
  \draw[->](2.9,-2.4)--(2.6,-2.1);
  \draw[->](3.3,-2.3)--(3.0,-2);
  \draw[->](3.7,-2.2)--(3.4,-1.9);
  \draw[->](4.1,-2.1)--(3.8,-1.8);
  \draw[->](4.3,-1.9)--(4,-1.6);
  \draw[line width=0.5pt, dashed]
  (3.75,-0.75)to[out=-95,in=20](1.75,-1.75)
  (3.5,-0.5)to[out=-95,in=20](1.5,-1.5)
  (3.25,-0.25)to[out=-95,in=20](1.25,-1.25);
\end{tikzpicture}
\caption{Illustration of the domain and boundary portions.}\label{Fig:domainboundary}
\end{figure}
 Let us take $\overline{\Gamma_D\cup\Gamma_N}=\p\Omega$ and $\Gamma_D\cap\Gamma_N=\emptyset$. Boundary conditions on the displacement field $\u$ are imposed. Typically, we use an acoustic speaker system to generate harmonic vibration.  If the acoustic speaker is placed on the portion $\Gamma_D$ of the boundary $\p\Omega$, then the boundary conditions for $\u$ can be expressed approximately by
\begin{eqnarray*}
&~~~~ \u={\bf g} &\quad\hbox{on}\,\,\Gamma_D, \\
 & 2(\mu+i\omega\eta_{\mu})\na^s\u \, \n+(\lambda+i\omega\eta_{\lambda})(\na\cdot\u) \n=0 &\quad\hbox{on}\,\,{\Gamma_N},
 \end{eqnarray*}
where   $\n$ is the outward unit normal vector to the boundary.

Noting that $\na\cdot\u \approx 0$ (incompressible) whereas   $\lambda = \f{2\mu\nu}{(1-2\nu)}\approx \infty$ (Poisson's ratio $\nu\approx \f 12$) inside the medium,  we introduce the internal pressure $p=(\lambda+i\omega\eta_{\lambda}) \na \cdot \u$, with a limit $p=\lim_{\lambda \to\infty, \na\cdot\u\to 0} (\lambda+i\omega\eta_{\lambda}) \na \cdot \u$. Then, under the limit $\lambda = \infty$ and under the assumption $(\mu, \eta_\mu) \in \widetilde S$, the time harmonic displacement $\u$ and pressure $p$, $(\u, p)\in H^2(\Omega)\times L^2(\Omega)$ satisfy the  following Stokes system \cite{Ammari2013b,Ammari2008}:
\begin{equation}\label{pressureharmonic}
\left\{
\begin{array}{ll}
2\na\cdot\left((\mu+i\omega\eta_{\mu})\na^s\u\right)+\na p+\rho\omega^2\u=0 \quad &\hbox{in} \,\,\Omega,\\
\na\cdot\u=0 \quad  &\hbox{in} \,\,\Omega,\\
\u={\bf g} \quad  &\hbox{on}\,\, \Gamma_D,\\
2(\mu+i\omega\eta_{\mu})\na^s\u \, \n+p \n=0\quad  &\hbox{on} \,\, \Gamma_N.
\end{array}\right.
\end{equation}
Note that if $\Gamma_D = \partial \Omega$ ($\Gamma_N = \emptyset$), then ${\bf g}$ should satisfy the compatibility condition $\int_{\partial \Omega} {\bf g} \cdot \n \, ds = 0$.

Let $\u_m$ denote the displacement data that is measured in $\Om$.  Then, the inverse problem is to reconstruct the distribution of $\mu$ and $\eta_{\mu}$  from the measured data $\u_m$.
\subsection{Optimal control method}

 Define the misfit (or discrepancy) functional $J(\mu, \eta_{\mu})$ in terms  of $\mu$ and $\eta_{\mu}$ by the $L^2$-norm in $\Omega$ of the difference between the numerical solution $\u[\mu,\eta_{\mu}]$ of the forward problem  \eref{pressureharmonic} and the measured displacement data $\u_m=\u_m[\mu^*,\eta_{\mu}^*]$:
\begin{equation}\label{minproblem}
 J(\mu, \eta_{\mu})=\f{1}{2}\int_{\Omega}|\u[\mu,\eta_{\mu}]-\u_m|^2d\x.
\end{equation}
where $\mu^*$ and $\eta_{\mu}^*$ are true distributions of shear elasticity and viscosity, respectively.
The reconstruction of the unknowns $\mu$ and $\eta_{\mu}$  can be obtained by minimizing the misfit functional $J(\mu, \eta_{\mu})$  with respect to $\mu$ and $\eta_{\mu}$.

In order to construct a minimizing sequence of $J(\mu, \eta_{\mu})$, we need to compute  the Fr\'echet derivatives of $ J(\mu, \eta_{\mu})$ with respect to $\mu$ and $\eta_{\mu}$. Assume that $\delta_\mu$ and $\delta_{\eta_{\mu}}$ are small perturbations of $\mu$ and $\eta_\mu$, respectively, by regarding $\f{\delta \mu+i\om \delta_{\eta_{\mu}}}{\mu+i\om \eta_{\mu}}\approx 0$.
For notational simplicity, we denote  $\u_0:=\u[\mu, \eta_{\mu}]$, $p_0:=$ the pressure corresponding to $\u_0$ and $p_0+p_1:=$ the pressure corresponding to  $\u[\mu+\delta_{\mu},\eta_{\mu}+\delta_{\eta_{\mu}}]$. Denoting the perturbation of displacement field by
\begin{equation}\label{Eq:tayloru}
\delta\u:=\u[\mu+\delta_{\mu},\eta_{\mu}+\delta_{\eta_{\mu}}]-\u_0,
\end{equation}
it follows from (\ref{pressureharmonic}) that
\begin{equation}\label{delta_u}
\begin{array}{l}
2\na\cdot\left((\mu+i\omega\eta_{\mu})\na^s\delta\u\right)+\na p_1+\rho\omega^2\delta\u\\
\quad\quad\quad=-2\na\cdot\left((\delta_{\mu}+i\omega\delta_{\eta_{\mu}})\na^s\u_0\right)  -2\na\cdot\left((\delta_{\mu}+i\omega\delta_{\eta_{\mu}})\na^s\delta\u\right)\q\mbox{in }~~\Om.
\end{array}
 \end{equation}

Let $\u_1$ be the solution of the following problem
\begin{equation}\label{perturbedsystem}
\left\{
\begin{array}{ll}
2\na\cdot\left((\mu+i\omega\eta_{\mu})\na^s\u_1\right)+\na p_1+\rho\omega^2\u_1=&\\
\hspace{5cm}-2\na\cdot\left((\delta_{\mu}+i\omega\delta_{\eta_{\mu}})\na^s\u_0\right) ~~&\hbox{in}~\Omega,\\
\na\cdot\u_1=0 \quad &\hbox{in} \,\,\Omega,\\
\u_1={\bf 0} \quad &\hbox{on}\,\, \Gamma_D,\\
2(\mu+i\om\eta_{\mu})\na^s\u_1\n+ p_1\n=0\quad &\hbox{on} \,\,\Gamma_N.
\end{array}\right.
\end{equation}
Now we are ready to state two main theorems in this section which give the Fr\'echet derivatives of $J(\mu, \eta_{\mu})$  with respect to $\mu$ and $\eta_{\mu}$. Denote ${\bf A:B}=\sum_{i,j}A_{ij}B_{ij}$ for two matrices ${\bf A}=(A_{ij})$ and ${\bf B}=(B_{ij})$.
\begin{theorem}\label{Th:frechetderivative}
  For $(\delta \mu + \mu, \delta \eta_\mu + \eta_\mu) \in \widetilde S$,
  if $\u_1$ is defined by (\ref{perturbedsystem}), then we have
\begin{equation}\label{Eq:realfrechet}
\Re\int_{\Omega}\u_1\overline{(\u_0-\u_m)} \, d\x =\Re\int_{\Omega}2(\delta_{\mu}+i\omega\delta_{\eta_{\mu}})\na^s\u_0:\na^s\bar\v \, d\x.
\end{equation}
  Furthermore, the Fr\'echet derivatives of $J(\mu, \eta_{\mu})$  with respect to $\mu$ and $\eta_{\mu}$ are given by
\begin{equation}\label{Eq:frechetderivative}
\f{\p }{\p\mu}J(\mu, \eta_{\mu})=\Re\left[ 2\na^s\u_0:\na^s\bar\v \right],\, \f{\p }{\p\eta_{\mu}}J(\mu, \eta_{\mu})=\Re\left[2(i\omega\na^s\u_0):\na^s\bar\v \right],
\end{equation}
where $\v$ is  the $H^1$ solution of the following adjoint problem \cite{Ammari2010,Evans2010}:
\begin{equation}\label{adjontproblem}
\left\{
\begin{array}{ll}
2\na\cdot\left((\mu-i\omega\eta_{\mu})\na^s\v\right)+\na q+\rho\omega^2\v=({\u_0-\u_m})\quad &\hbox{in}\,\,\Omega,\\
\na\cdot\v=0 \quad &\hbox{in} \,\,\Omega,\\
\v=0\quad &\hbox{on}\,\,\Gamma_D,\\
2(\mu-i\omega\eta_{\mu})\na^s\v\, \n+q\n=0\quad&\hbox{on}\,\,\Gamma_N.
\end{array}\right.
\end{equation}
\end{theorem}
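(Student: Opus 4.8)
The plan is to establish \eref{Eq:realfrechet} — in fact as a complex identity, of which \eref{Eq:realfrechet} is the real part — by testing the perturbed Stokes system \eref{perturbedsystem} against the adjoint state $\v$ and, symmetrically, the adjoint system \eref{adjontproblem} against $\u_1$, and comparing. First I would pair \eref{perturbedsystem} with $\bar\v$ and integrate by parts: the elliptic term produces $-\int_\Om 2(\mu+i\om\eta_\mu)\na^s\u_1:\na^s\bar\v\,d\x$ plus a boundary term $\int_{\p\Om}\big(2(\mu+i\om\eta_\mu)\na^s\u_1\,\n\big)\cdot\bar\v\,ds$, and the pressure term produces $-\int_\Om p_1\,\na\cdot\bar\v\,d\x+\int_{\p\Om}p_1\,(\bar\v\cdot\n)\,ds$; the volume pressure integral vanishes since $\na\cdot\v=0$, on $\Gamma_D$ the boundary terms vanish since $\v=0$, and on $\Gamma_N$ they cancel because $2(\mu+i\om\eta_\mu)\na^s\u_1\,\n+p_1\n=0$ there. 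Integrating the source $-2\na\cdot\big((\delta_\mu+i\om\delta_{\eta_\mu})\na^s\u_0\big)$ against $\bar\v$ by parts gives $\int_\Om 2(\delta_\mu+i\om\delta_{\eta_\mu})\na^s\u_0:\na^s\bar\v\,d\x$, the boundary contribution vanishing because $\mathrm{supp}(\delta_\mu,\delta_{\eta_\mu})\subset\Om'$; the outcome is
\[
-\int_\Om 2(\mu+i\om\eta_\mu)\na^s\u_1:\na^s\bar\v\,d\x+\rho\om^2\int_\Om\u_1\cdot\bar\v\,d\x=\int_\Om 2(\delta_\mu+i\om\delta_{\eta_\mu})\na^s\u_0:\na^s\bar\v\,d\x .
\]
Performing the analogous computation on \eref{adjontproblem} tested against $\bar\u_1$ — all boundary terms again cancelling, now via $\u_1=0$ on $\Gamma_D$ and the traction condition for $\v$ on $\Gamma_N$ — and then conjugating the whole identity (legitimate since $\mu,\eta_\mu,\om,\rho$ are real, so $\overline{\mu-i\om\eta_\mu}=\mu+i\om\eta_\mu$) yields
\[
-\int_\Om 2(\mu+i\om\eta_\mu)\na^s\bar\v:\na^s\u_1\,d\x+\rho\om^2\int_\Om\u_1\cdot\bar\v\,d\x=\int_\Om\u_1\cdot\overline{(\u_0-\u_m)}\,d\x .
\]
Since $\na^s\u_1:\na^s\bar\v=\na^s\bar\v:\na^s\u_1$, the left-hand sides of the two displays coincide, so $\int_\Om\u_1\cdot\overline{(\u_0-\u_m)}\,d\x=\int_\Om 2(\delta_\mu+i\om\delta_{\eta_\mu})\na^s\u_0:\na^s\bar\v\,d\x$, and taking real parts is exactly \eref{Eq:realfrechet}.

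For the Fr\'echet derivatives I would expand the functional. With $\delta\u$ as in \eref{Eq:tayloru}, the expansion $|\u_0+\delta\u-\u_m|^2=|\u_0-\u_m|^2+2\Re\big(\delta\u\cdot\overline{(\u_0-\u_m)}\big)+|\delta\u|^2$ gives
\[
J(\mu+\delta_\mu,\eta_\mu+\delta_{\eta_\mu})-J(\mu,\eta_\mu)=\Re\int_\Om\delta\u\cdot\overline{(\u_0-\u_m)}\,d\x+\f12\int_\Om|\delta\u|^2\,d\x .
\]
The remaining point is that $\delta\u=\u_1+\text{(higher order)}$: subtracting \eref{perturbedsystem} from \eref{delta_u}, the difference $\delta\u-\u_1$ solves a Stokes system of the same type with source $-2\na\cdot\big((\delta_\mu+i\om\delta_{\eta_\mu})\na^s\delta\u\big)$, which is quadratically small in $(\delta_\mu,\delta_{\eta_\mu})$ once $\f{\delta_\mu+i\om\delta_{\eta_\mu}}{\mu+i\om\eta_\mu}\approx0$ is combined with the stability of the forward map; the term $\int_\Om|\delta\u|^2$ is quadratic as well. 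Hence, to first order, $J(\mu+\delta_\mu,\eta_\mu+\delta_{\eta_\mu})-J(\mu,\eta_\mu)=\Re\int_\Om\u_1\cdot\overline{(\u_0-\u_m)}\,d\x$; substituting \eref{Eq:realfrechet} and writing $2(\delta_\mu+i\om\delta_{\eta_\mu})\na^s\u_0:\na^s\bar\v=\delta_\mu\big(2\na^s\u_0:\na^s\bar\v\big)+\delta_{\eta_\mu}\big(2i\om\na^s\u_0:\na^s\bar\v\big)$ with $\delta_\mu,\delta_{\eta_\mu}$ real reads off the two densities in \eref{Eq:frechetderivative}.

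The integrations by parts, the cancellation of boundary terms, and the conjugation step are routine once one works in the weak ($H^1\times L^2$) formulation and uses $\na\cdot\u_1=\na\cdot\v=0$, the conditions on $\Gamma_D$ and $\Gamma_N$, and $\mathrm{supp}(\delta_\mu,\delta_{\eta_\mu})\subset\Om'$. I expect the main obstacle to be the clean justification of the linearization: that $(\mu,\eta_\mu)\mapsto\u[\mu,\eta_\mu]$ is Fr\'echet differentiable from $\widetilde S$ into $H^1$ with derivative the solution of \eref{perturbedsystem}, and that \eref{perturbedsystem} and \eref{adjontproblem} are uniquely solvable. This rests on an invertibility/stability estimate for the time-harmonic Stokes operator at the working frequency $\om$ — available off an exceptional set of resonant frequencies, where the Fredholm alternative would leave a nontrivial kernel — after which the implicit function theorem supplies the derivative and a fixed-point contraction (using the smallness of $(\delta_\mu,\delta_{\eta_\mu})$ relative to $(\mu,\eta_\mu)$) controls the remainder $\delta\u-\u_1$; these well-posedness facts are furnished by the cited references \cite{Ammari2013b,Ammari2008,Ammari2010}.
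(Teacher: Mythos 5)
Your proposal is correct and follows essentially the same route as the paper: the identity \eref{Eq:realfrechet} comes from the same adjoint/integration-by-parts duality (the paper writes it as a single chain of equalities rather than two tested identities compared), and your linearization step is exactly the content of the paper's Theorem \ref{theorem:high_order} together with Proposition \ref{proposition:delta_u}, from which the paper reads off \eref{Eq:frechetderivative}. The only minor variation is that you control the remainder by applying the Stokes stability estimate directly to $\delta\u-\u_1$, whereas the paper bounds the remainder $\Upsilon$ through the adjoint representation; both rest on the same a priori estimate (Lemma \ref{lemma:estimate}).
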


The next theorem shows the differentiability of $J(\mu,\eta_{\mu})$.
\begin{theorem}\label{theorem:high_order}
The misfit functional $J(\mu,\eta_{\mu})$ is Fr\'echet differentiable for $(\mu, \eta_\mu) \in \widetilde{S}$. In other words,
if $\u_1\in H^1(\Om)$ is the weak solution to (\ref{perturbedsystem}), as the perturbations $\delta_{\mu}, \delta_{\eta}\rightarrow0$, we have the following formula:
\begin{equation*}
\hspace{-2.5cm}\left|J(\mu+\delta_{\mu}, \eta_{\mu}+\delta_{\eta_{\mu}})-J(\mu, \eta_{\mu})-\Re\int_{\Omega}\u_1(\overline{\u_0-\u_m})d\x\right|
=O\left((||\delta_{\mu}||_{H^2(\Om)}+||\delta_{\eta_{\mu}}||_{H^2(\Om)})^2\right).
\end{equation*}
\end{theorem}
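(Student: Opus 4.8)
\noindent\emph{Proof strategy.} The plan is to expand $J$ about $(\mu,\eta_{\mu})$, recognize the claimed linear term, and control the remainder by energy estimates for the Stokes system. With $\delta\u$ as in \eref{Eq:tayloru}, expanding the square in \eref{minproblem} gives
\begin{equation*}
J(\mu+\delta_{\mu},\eta_{\mu}+\delta_{\eta_{\mu}})-J(\mu,\eta_{\mu})=\Re\int_{\Om}\delta\u\,\overline{(\u_0-\u_m)}\,d\x+\f12\int_{\Om}|\delta\u|^2\,d\x ,
\end{equation*}
so the quantity to be bounded equals $\Re\int_{\Om}(\delta\u-\u_1)\,\overline{(\u_0-\u_m)}\,d\x+\f12\int_{\Om}|\delta\u|^2\,d\x$. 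Since $\u_0-\u_m$ is fixed, Cauchy--Schwarz reduces the claim to the two stability estimates $\|\delta\u\|_{H^1(\Om)}\le C(\|\delta_{\mu}\|_{H^2}+\|\delta_{\eta_{\mu}}\|_{H^2})$ and $\|\delta\u-\u_1\|_{H^1(\Om)}\le C(\|\delta_{\mu}\|_{H^2}+\|\delta_{\eta_{\mu}}\|_{H^2})^2$.

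I would then work in $V=\{\v\in[H^1(\Om)]^d:\na\cdot\v=0,\ \v|_{\Gamma_D}=0\}$ with the sesquilinear form $b(\u,\v)=\int_{\Om}2(\mu+i\om\eta_{\mu})\na^s\u:\na^s\bar\v\,d\x-\rho\om^2\int_{\Om}\u\cdot\bar\v\,d\x$, the pressure being absent upon testing against $\v\in V$ and the $\Gamma_N$ condition being natural. The key point is $\Im b(\u,\u)=2\om\int_{\Om}\eta_{\mu}|\na^s\u|^2\,d\x$, so $\eta_{\mu}>c_1>0$ together with Korn's inequality on $V$ (valid since $\Gamma_D$ has positive surface measure) yields $|b(\u,\u)|\ge\alpha\|\u\|_{H^1(\Om)}^2$ with $\alpha=\alpha(c_1,\om,\Om)$ independent of the perturbation. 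By the Lax--Milgram theorem, every Stokes problem of the type \eref{perturbedsystem} with source $-2\na\cdot\mathbf F$, $\mathbf F\in[L^2(\Om)]^{d\times d}$, has a unique weak solution in $V$ satisfying $\|\cdot\|_{H^1(\Om)}\le C\|\mathbf F\|_{L^2(\Om)}$; in particular $\u_1$ exists, and no nonresonance hypothesis is required since the viscosity alone supplies coercivity.

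Subtracting \eref{pressureharmonic} at $(\mu+\delta_{\mu},\eta_{\mu}+\delta_{\eta_{\mu}})$ from the one at $(\mu,\eta_{\mu})$ reproduces \eref{delta_u}; the Dirichlet data of $\delta\u$ vanish (both states share the boundary data ${\bf g}$), and since $\mbox{supp}(\delta_{\mu},\delta_{\eta_{\mu}})\subset\Om'$ the coefficient perturbation never reaches $\Gamma_N$, so the induced Neumann data vanish as well and $\delta\u\in V$ solves the above Stokes problem with $\mathbf F=(\delta_{\mu}+i\om\delta_{\eta_{\mu}})\na^s(\u_0+\delta\u)$. Testing against $\delta\u$, bounding $\|\delta_{\mu}+i\om\delta_{\eta_{\mu}}\|_{L^\infty}$ via the embedding $H^2(\Om)\hookrightarrow L^\infty(\Om)$ ($d\le 3$), and absorbing the $\|\delta\u\|_{H^1}$ term for perturbations small in $H^2$ gives the first estimate. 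Subtracting \eref{perturbedsystem} from \eref{delta_u} shows that $\delta\u-\u_1$ solves the same Stokes problem with the strictly smaller source $-2\na\cdot\big((\delta_{\mu}+i\om\delta_{\eta_{\mu}})\na^s\delta\u\big)$, so $\|\delta\u-\u_1\|_{H^1}\le C\|(\delta_{\mu},\delta_{\eta_{\mu}})\|_{L^\infty}\|\delta\u\|_{H^1}$, which is quadratic by the first estimate. Substituting both bounds into the remainder and using Theorem~\ref{Th:frechetderivative} to identify $\Re\int_{\Om}\u_1\overline{(\u_0-\u_m)}\,d\x$ as a bounded linear functional of $(\delta_{\mu},\delta_{\eta_{\mu}})$ on $[H^2_0(\Om)]^2$ finishes the proof.

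The main obstacle I expect is the pair of energy estimates. One must keep the stability constant uniform under perturbation of the coefficients --- handled by always testing against the \emph{fixed} base form $b$ and moving the perturbed part to the right-hand side --- and one must extract genuine quadratic smallness, which hinges on observing that $\delta\u-\u_1$ is forced by the product $(\delta_{\mu},\delta_{\eta_{\mu}})\,\na^s\delta\u$ of two factors each $O(\|\delta_{\mu}\|_{H^2}+\|\delta_{\eta_{\mu}}\|_{H^2})$. The two structural facts that make this clean are the support condition $\mbox{supp}(\delta_{\mu},\delta_{\eta_{\mu}})\subset\Om'$, which removes the boundary terms, and the strict positivity $\eta_{\mu}>c_1$, which delivers coercivity through $\Im b$.
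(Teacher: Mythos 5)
Your proposal is correct, but it follows a genuinely different route from the paper. Both arguments start from the same expansion, so the remainder is $\Re\int_{\Om}(\delta\u-\u_1)\cdot\overline{(\u_0-\u_m)}\,d\x+\f12\int_{\Om}|\delta\u|^2\,d\x$, and both exploit the same structural fact that $\delta\u-\u_1$ is driven by the quadratically small source $-2\na\cdot\bigl((\delta_{\mu}+i\om\delta_{\eta_{\mu}})\na^s\delta\u\bigr)$. The paper, however, handles the cross term by substituting the adjoint equation \eref{adjontproblem} for $\u_0-\u_m$, integrating by parts against $\bar\v$ to reach $\Upsilon=\f12\int_\Om|\delta\u|^2\,d\x-\Re\int_\Om\bigl(2\na\cdot(\delta_{\mu}+i\om\delta_{\eta_{\mu}})\na^s\delta\u\bigr)\cdot\bar\v\,d\x$, and then invoking H\"older together with Proposition \ref{proposition:delta_u}, whose proof rests on the $H^2$ interior estimate of Lemma \ref{lemma:estimate}. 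You avoid the adjoint solution and the $H^2$ regularity entirely: you apply Cauchy--Schwarz against the fixed $L^2$ function $\u_0-\u_m$ and prove two $H^1$ energy estimates (linear for $\delta\u$, quadratic for $\delta\u-\u_1$) via Lax--Milgram coercivity obtained from $\Im b(\u,\u)=2\om\int_\Om\eta_\mu|\na^s\u|^2\,d\x$, Korn's inequality on $V$, and the embedding $H^2(\Om)\hookrightarrow L^\infty(\Om)$, with an absorption step that is legitimate since $\delta_\mu,\delta_{\eta_\mu}\to0$ in $H^2$. What your approach buys is a more self-contained argument at the $H^1$ level: it makes explicit the well-posedness and uniform (in the perturbation) stability constant that the paper leaves implicit, correctly handles the mixed $\Gamma_D/\Gamma_N$ conditions through the natural weak formulation, and shows no nonresonance assumption is needed because the viscosity supplies coercivity; what the paper's route buys is reuse of the adjoint machinery and $H^2$ estimates it needs anyway for Theorem \ref{Th:frechetderivative} and Proposition \ref{proposition:delta_u}. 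Your closing appeal to Theorem \ref{Th:frechetderivative} for boundedness/linearity of the first-order term is not circular, since identity \eref{Eq:realfrechet} is proved independently of this theorem; only minor bookkeeping (e.g., distinguishing the pressures in \eref{delta_u} and \eref{perturbedsystem}, which disappear anyway when testing in $V$) would be needed to write your sketch out in full.
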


To prove the  Fr\'echet differentiability Theorem \ref{theorem:high_order} and the  main Theorem \ref{Th:frechetderivative}, we need  the following preliminary results.

 Firstly, we state an interior estimate for the solution of the Stokes system whose proof basically follows from \cite{Chen1998,Giaquinta2012,Li2003} by observing $\nabla \cdot \nabla^s \w = \Delta \w$ for $\w$ satisfying $\na\cdot\w=0$.
\begin{lemma}\label{lemma:estimate}
For ${\bf F}\in L^2(\Om)$ and $(\mu, \eta_{\mu})\in \widetilde S$, let $\w\in H^1(\Om)$ be a weak solution of the following problem:
$$\left\{
\begin{array}{ll}
 2 \na\cdot(\mu+i\om\eta_{\mu})\na^s\w+\na p+\rho\om^2\w={\bf F} \quad&\hbox{in}\,\,\Om,\\
\na\cdot\w=0\quad&\hbox{in}\,\,\Om,\\
\w={\bf 0} \quad &\hbox{on}\,\, \p\Om.
\end{array}\right.
$$
Then, $\w\in H^2(\Om)$ and
\begin{equation}\label{Eq-interior-estimate}
||\w||_{H^2(\Om)}\leq C||{\bf F}||_{L^2(\Om)},\end{equation}
where   $C$ is positive constant independent of ${\bf F}$.
\end{lemma}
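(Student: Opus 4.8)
The plan is to recast the system as a generalized (variable-viscosity) Stokes system with a uniformly elliptic but merely H\"older continuous coefficient, to establish an a priori $H^1$-bound by an energy argument that exploits the strict positivity of $\eta_\mu$, and then to invoke the classical $L^2$-regularity theory for such systems. To begin, since $(\mu,\eta_\mu)\in\widetilde S$ the coefficient $\gamma:=\mu+i\om\eta_\mu$ lies in $H^2(\Om)$ with $\Re\gamma=\mu\in(c_1,c_2)$ and $|\gamma|\le\sqrt2\,c_2$; because $d\in\{2,3\}$, the Sobolev embedding $H^2(\Om)\hookrightarrow C^{0,\alpha}(\overline\Om)$ shows that $\gamma$ is uniformly H\"older continuous, hence uniformly continuous, on $\overline\Om$. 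Using the identity recorded just before the lemma, $2\,\na\cdot\na^s\w=\Delta\w$ whenever $\na\cdot\w=0$, the divergence-form operator $2\,\na\cdot(\gamma\,\na^s\,\cdot\,)$ has the property that, after freezing its coefficient at any point $x_0$, it acts on divergence-free fields as $\gamma(x_0)\Delta$; it is therefore strongly (Legendre--Hadamard) elliptic, since $\Re\gamma>0$, and the complex-valuedness of $\gamma$ is immaterial once $\w$ is regarded as a real $2d$-component field.

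Next I would prove the a priori bound $\|\w\|_{H^1(\Om)}\le C\,\|{\bf F}\|_{L^2(\Om)}$. Testing the weak formulation with $\overline{\w}$ is admissible because $\w\in H^1_0(\Om)^d$ and $\na\cdot\w=0$, so the pressure and boundary terms drop out, leaving
\[
-\int_\Om 2\gamma\,|\na^s\w|^2\,d\x+\rho\om^2\int_\Om|\w|^2\,d\x=\int_\Om{\bf F}\cdot\overline{\w}\,d\x.
\]
Taking imaginary parts and using $\eta_\mu\ge c_1>0$ gives $2\om c_1\,\|\na^s\w\|_{L^2(\Om)}^2\le\|{\bf F}\|_{L^2(\Om)}\,\|\w\|_{L^2(\Om)}$, and then Korn's inequality on $H^1_0$ followed by the Poincar\'e inequality yields the stated bound; in particular ${\bf F}=0$ forces $\na^s\w=0$, hence $\w\equiv0$, so the weak solution is unique. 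It is the viscous term $\eta_\mu>0$ that makes this argument close directly, without a Fredholm-alternative or smallness hypothesis to absorb the non-coercive zeroth-order term $\rho\om^2\w$.

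It remains to upgrade $\w$ from $H^1$ to $H^2$. Since $\gamma$ is uniformly elliptic and continuous, $\p\Om$ is smooth, $\w|_{\p\Om}=0$, and the right-hand side ${\bf F}-\rho\om^2\w$ lies in $L^2(\Om)$, the $L^2$-regularity theory for the generalized Stokes system --- interior difference quotients, flattening of $\p\Om$ together with tangential difference quotients, and recovery of $p\in H^1(\Om)$ modulo constants from the constraint $\na\cdot\w=0$, following \cite{Chen1998,Giaquinta2012,Li2003} --- gives $\w\in H^2(\Om)$ together with
\[
\|\w\|_{H^2(\Om)}\le C\big(\|{\bf F}-\rho\om^2\w\|_{L^2(\Om)}+\|\w\|_{H^1(\Om)}\big)\le C\big(\|{\bf F}\|_{L^2(\Om)}+\|\w\|_{H^1(\Om)}\big).
\]
Combining this with the a priori bound proves \eref{Eq-interior-estimate}, with $C$ depending only on $\Om$, $d$, $\rho$, $\om$, $c_1$, $c_2$ and the modulus of continuity of $\gamma$.

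I expect the last step to be the main obstacle: the $H^2$-estimate up to $\p\Om$ for a Stokes system whose viscosity is only continuous, not Lipschitz. Near the boundary the difference-quotient method controls only the tangential second derivatives of $\w$ directly; the remaining second derivatives, together with $\na p$, must be extracted algebraically from the momentum equation and the incompressibility constraint, while the coefficient error terms are absorbed using the modulus of continuity of $\gamma$ over small balls. Crucially, no derivative of $\gamma$ ever enters, which is why $H^2$ regularity of $\mu$ and $\eta_\mu$ (rather than $C^1$ bounds) is enough. This is precisely what the cited regularity theory delivers, and the identity $2\,\na\cdot\na^s\w=\Delta\w$ on divergence-free fields is exactly what permits one to quote the standard Stokes and elliptic-system estimates for the strain-based operator appearing in the lemma.
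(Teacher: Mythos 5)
Your setup and your $H^1$ energy estimate are fine, and indeed go beyond the paper, which offers no detailed proof of this lemma but simply remarks that it ``basically follows'' from the cited elliptic/Stokes regularity theory \cite{Chen1998,Giaquinta2012,Li2003} after observing that $2\na\cdot\na^s\w=\Delta\w$ on divergence-free fields. Testing with $\overline{\w}$, killing the pressure via $\na\cdot\w=0$, taking imaginary parts to exploit $\eta_\mu>c_1>0$, and closing with Korn and Poincar\'e is correct and also yields uniqueness without any Fredholm argument; that part I would keep as is.

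The genuine gap is in the final step, and it is exactly where you predicted the difficulty but then waved it away: $H^2$ regularity for a \emph{divergence-form} Stokes (or elliptic) system does \emph{not} follow from uniform continuity of the viscosity, and a constant depending only on ``the modulus of continuity of $\gamma$'' cannot exist. Already in one dimension, $(a u')'=0$ gives $u'=c/a$, so choosing $a$ H\"older continuous, bounded away from zero, but not in $H^1_{loc}$ produces a solution in $C^{1,\alpha}\setminus H^2_{loc}$; a shear-flow version of this ($\w=(v(x_2),0,0)$, $\nu=\nu(x_2)$, $p=\mathrm{const}$) transplants the counterexample into the Stokes system. Concretely, in your difference-quotient scheme the commutator term $(D_h\gamma)\na^s\w$ must be bounded in $L^2$ uniformly in $h$, and this requires $\na\gamma$ in some $L^q$, not merely a modulus of continuity; so your claim that ``no derivative of $\gamma$ ever enters'' is backwards. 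The hypothesis $(\mu,\eta_\mu)\in\widetilde S$ is what saves the statement: $\gamma=\mu+i\om\eta_\mu\in H^2(\Om)$ and $d\le 3$ give $\na\gamma\in L^6(\Om)$ (and $\gamma\in C^{0,1/2}$). The argument should therefore run: (i) by the continuous-coefficient $W^{1,p}$ theory (or Meyers plus bootstrap), ${\bf F}-\rho\om^2\w\in L^2\subset W^{-1,6}$ yields $\na\w\in L^6(\Om)$; (ii) write $2\na\cdot(\gamma\na^s\w)=\gamma\Delta\w+2(\na\gamma)\na^s\w$ using $\na\cdot\w=0$, note $(\na\gamma)\na^s\w\in L^3\subset L^2$ by H\"older, and move it to the right-hand side; (iii) apply $H^2$-up-to-the-boundary theory for the non-divergence-form Stokes system $\gamma\Delta\w+\na p=\widetilde{\bf F}\in L^2$ with continuous, uniformly elliptic $\gamma$ and smooth $\p\Om$, then absorb the lower-order terms with the $H^1$ bound. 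With this repair the constant depends on $c_1,c_2$, $\|\gamma\|_{H^2(\Om)}$, $\rho$, $\om$ and $\Om$ --- not only on the modulus of continuity --- and the lemma follows.
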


The following estimate for $\delta\u$ holds.
\begin{proposition}\label{proposition:delta_u}
The perturbation of displacement field $\delta\u\in H^1(\Om)$ satisfies the following estimate:
$$||\delta\u||_{H^2(\Om)}\leq C(||\delta_{\mu}||_{H^2(\Om)}+||\delta_{\eta_{\mu}}||_{H^2(\Om)})||\u_0||_{H^2(\Om)},$$
where $C$ is positive constant independent of $\delta_\mu$ and $\delta_{\eta_{\mu}}$.
\end{proposition}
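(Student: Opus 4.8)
The plan is to read the perturbation equation \eref{delta_u} as a Stokes system for the pair $(\delta\u,p_1)$ and then control it by Lemma \ref{lemma:estimate}. First I would record the boundary conditions: $\delta\u$ is divergence free (it is the difference of two divergence-free fields), it vanishes on $\Gamma_D$, and it satisfies the homogeneous traction condition $2(\mu+i\om\eta_\mu)\na^s\delta\u\,\n+p_1\n=0$ on $\Gamma_N$, because subtracting the $\Gamma_N$ conditions of \eref{pressureharmonic} written for $(\mu,\eta_\mu)$ and for $(\mu+\delta_\mu,\eta_\mu+\delta_{\eta_\mu})$ leaves the extra term $2(\delta_\mu+i\om\delta_{\eta_\mu})\na^s\u[\mu+\delta_\mu,\eta_\mu+\delta_{\eta_\mu}]\,\n$, which vanishes on $\Gamma_N\subset\mathcal E$ since $\delta_\mu,\delta_{\eta_\mu}$ are supported in $\Om'$. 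Putting
\[
{\bf F}:=-2\na\cdot\big((\delta_\mu+i\om\delta_{\eta_\mu})\na^s\u_0\big)-2\na\cdot\big((\delta_\mu+i\om\delta_{\eta_\mu})\na^s\delta\u\big),
\]
the pair $(\delta\u,p_1)$ then solves a Stokes system with forcing ${\bf F}$. Since $\delta\u=\u[\mu+\delta_\mu,\eta_\mu+\delta_{\eta_\mu}]-\u_0$ is the difference of two $H^2(\Om)$ solutions of \eref{pressureharmonic} (both parameter pairs lying in $\widetilde S$), it already belongs to $H^2(\Om)$ and ${\bf F}\in L^2(\Om)$ is well defined; the content of the proposition is the quantitative bound of $||\delta\u||_{H^2(\Om)}$ by $||{\bf F}||_{L^2(\Om)}$. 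I would then apply Lemma \ref{lemma:estimate} — which carries over to the present mixed boundary conditions because ${\bf F}$ is supported in $\Om'$, so that near $\p\Om$ one only faces the unforced Stokes problem with coefficients $\mu_0,\eta_{\mu_0}$, and because the ellipticity constants are uniformly controlled over $\widetilde S$ by $c_1,c_2$ — to obtain $||\delta\u||_{H^2(\Om)}\le C\,||{\bf F}||_{L^2(\Om)}$.

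Next I would estimate $||{\bf F}||_{L^2(\Om)}$. Writing $f:=\delta_\mu+i\om\delta_{\eta_\mu}$, a (complex-valued) function in $H^2_0(\Om)$ since $\delta_\mu,\delta_{\eta_\mu}\in H^2_0(\Om)$, and using the Sobolev product estimate $||fg||_{H^1(\Om)}\le C\,||f||_{H^2(\Om)}\,||g||_{H^1(\Om)}$, valid because $d\le3$, we get $f\,\na^s\u_0,\, f\,\na^s\delta\u\in H^1(\Om)$ with
\[
||{\bf F}||_{L^2(\Om)}\le 2\,||f\,\na^s\u_0||_{H^1(\Om)}+2\,||f\,\na^s\delta\u||_{H^1(\Om)}\le C\,||f||_{H^2(\Om)}\big(||\u_0||_{H^2(\Om)}+||\delta\u||_{H^2(\Om)}\big),
\]
where we also used $||\na^s\w||_{H^1(\Om)}\le C\,||\w||_{H^2(\Om)}$ and $||f||_{H^2(\Om)}\le C(||\delta_\mu||_{H^2(\Om)}+||\delta_{\eta_\mu}||_{H^2(\Om)})$.

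Combining the two displays and setting $\tau:=||\delta_\mu||_{H^2(\Om)}+||\delta_{\eta_\mu}||_{H^2(\Om)}$ gives
\[
||\delta\u||_{H^2(\Om)}\le C\,\tau\,||\u_0||_{H^2(\Om)}+C\,\tau\,||\delta\u||_{H^2(\Om)}.
\]
The main obstacle is precisely the self-referential term $C\,\tau\,||\delta\u||_{H^2(\Om)}$ on the right: it must be absorbed into the left-hand side, which is legitimate under the smallness assumption already invoked in the paper (the condition $\f{\delta_\mu+i\om\delta_{\eta_\mu}}{\mu+i\om\eta_\mu}\approx0$), made quantitative as $C\tau\le\f{1}{2}$. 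Then $\f{1}{2}\,||\delta\u||_{H^2(\Om)}\le C\,\tau\,||\u_0||_{H^2(\Om)}$, that is,
\[
||\delta\u||_{H^2(\Om)}\le 2C\,\tau\,||\u_0||_{H^2(\Om)},
\]
which is the asserted estimate, the constant $2C$ being independent of $\delta_\mu,\delta_{\eta_\mu}$ in the regime $C\tau\le\f{1}{2}$. If one prefers not to take $\delta\u\in H^2(\Om)$ for granted, the same chain of inequalities shows that the map sending ${\bf h}\in H^2(\Om)$ to the Stokes solution with forcing $-2\na\cdot(f(\na^s\u_0+\na^s{\bf h}))$ is a contraction on $H^2(\Om)$ as soon as $C\tau\le\f{1}{2}$, which delivers at once the existence of $\delta\u\in H^2(\Om)$ and the same bound.
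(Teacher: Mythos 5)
Your argument is correct, but it handles the quadratic term differently from the paper. The paper's proof rewrites \eref{delta_u} by moving $-2\na\cdot\left((\delta_{\mu}+i\omega\delta_{\eta_{\mu}})\na^s\delta\u\right)$ to the left-hand side, so that $\delta\u$ solves a Stokes system with the \emph{perturbed} coefficient $\mu+\delta_\mu+i\om(\eta_\mu+\delta_{\eta_\mu})$, which still lies in $\widetilde S$ by hypothesis; Lemma \ref{lemma:estimate} then applies directly with only the single forcing term $-2\na\cdot\left((\delta_{\mu}+i\omega\delta_{\eta_{\mu}})\na^s\u_0\right)$, and the H\"older/Sobolev product estimate finishes the proof with no self-referential term to absorb. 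You instead keep the coefficient $\mu+i\om\eta_\mu$ fixed, put both terms into the forcing ${\bf F}$, and close the estimate by absorbing $C\tau\|\delta\u\|_{H^2(\Om)}$ into the left under the quantitative smallness condition $C\tau\le\f12$. Both routes are legitimate: the paper's reformulation buys an estimate whose constant is uniform over $\widetilde S$ with no explicit smallness restriction on $\tau$ (only the standing assumption $(\mu+\delta_\mu,\eta_\mu+\delta_{\eta_\mu})\in\widetilde S$), whereas your version is conditional on small perturbations — harmless for the asymptotic use in Theorem \ref{theorem:high_order}, but strictly weaker than the unconditional statement. On the other hand, your write-up is more careful on two points the paper glosses over: you verify the boundary conditions satisfied by $\delta\u$ (in particular that the traction condition on $\Gamma_N$ is unaffected because $\mathrm{supp}(\delta_\mu,\delta_{\eta_\mu})\subset\Om'$, which is also what justifies invoking the interior-type estimate despite the mixed boundary conditions), and your contraction remark additionally yields the existence of $\delta\u$ in $H^2(\Om)$ rather than assuming it.
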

\begin{proof}
 From (\ref{delta_u}), $\delta\u$ satisfies
\begin{equation}\label{delta_u2}
\begin{array}{l}
2\na\cdot\left((\mu+\delta_{\mu}+i\omega(\delta_{\eta_{\mu}}+\eta_{\mu}))\na^s\delta\u\right)+\na p_1+\rho\omega^2\delta\u\\
\hspace{5cm}
=-2\na\cdot\left((\delta_{\mu}+i\omega\delta_{\eta_{\mu}})\na^s\u_0\right)\quad\mbox{in }~~\Om.\end{array}
\end{equation}
 Applying the interior estimate  (\ref{Eq-interior-estimate})  to (\ref{delta_u2}) and using H\"older's inequality and Sobolev embedding theorem
  \cite{Adams2003,Evans2010}, we arrive at
$$\begin{array}{ll}
||\delta\u||_{H^2(\Om)}
&\leq C||\na\cdot\left((\delta_{\mu}+i\omega\delta_{\eta_{\mu}})\na^s\u_0\right)||_{L^2(\Om)}\\
&\leq C\left(||\delta_{\mu}+i\omega\delta_{\eta_{\mu}}||_{L^{\infty}(\Om)}||\u_0||_{H^2}+||\na(\delta_{\mu}+i\omega\delta_{\eta_{\mu}})||_{L^4(\Om)}||\na\u_0||_{L^4(\Om)}\right)\\
&\leq C\left(||\delta_{\mu}||_{H^2(\Om)}+||\delta_{\eta_{\mu}}||_{H^2(\Om)}\right)||\u_0||_{H^2(\Om)}.
\end{array}$$
This completes the proof.
\end{proof}

Now we are ready to prove Theorem \ref{theorem:high_order}.
\begin{proof}[Proof of Theorem \ref{theorem:high_order}]
From the definition of $J(\mu, \eta_{\mu})$ in \eref{minproblem}, we have
\begin{equation*}
J(\mu+\delta_{\mu}, \eta_{\mu}+\delta_{\eta_{\mu}})=J(\mu, \eta_{\mu})+\Re\int_{\Omega}\u_1(\overline{\u_0-\u_m})d\x
+\Upsilon,
\end{equation*}
where $\Upsilon$ is
\begin{equation}\label{Eq-Upsilon}
\Upsilon=\Re\int_{\Omega}(\delta\u-\u_1)\cdot(\overline{\u_0-\u_m})d\x
+\f{1}{2}\int_{\Om}|\delta\u|^2d\x.
\end{equation}
Using the adjoint problem (\ref{adjontproblem}),  (\ref{Eq-Upsilon}) can be expressed as
$$\Upsilon=\f{1}{2}\int_\Om|\delta\u|^2 d\x+
\Re\int_{\Omega}(\delta\u-\u_1)\cdot(\overline{2\na\cdot(\mu-i\omega\eta_{\mu})\na^s\v+\na q+\rho\omega^2\v})d\x.
$$
 Using $\na\cdot\delta\u=\na\cdot(\u_0+\delta\u)-\na\cdot\u_0=0$ and homogeneous boundary conditions for $\u_1$ and $\delta\u$, we have
\begin{equation}\label{Eq-Upsilon-Integral}
\Upsilon=\f{1}{2}\int_\Om|\delta\u|^2 d\x-\Re\int_\Om(2\na\cdot(\delta_{\mu}+i\omega\delta_\eta)\na^s\delta\u)\cdot\bar\v d\x.
\end{equation}
Applying H\"older's inequality, $\Upsilon$ is estimated by
$$\begin{array}{ll}
\left|\Upsilon\right|&\leq\f{1}{2}||\delta\u||^2_{L^2(\Om)}+(||\delta_{\mu}||_{L^{\infty}(\Om)}+||\omega\delta_{\eta}||_{L^{\infty}(\Om)})||\na\delta\u||_{L^2(\Om)}||\na\bar\v||_{L^2(\Om)},\\
&\leq C||\na\delta\u||_{L^2(\Om)}\left(\f{1}{2}||\na\delta\u||_{L^2(\Om)}+(||\delta_{\mu}||_{L^{\infty}(\Om)}+||\omega\delta_{\eta}||_{L^{\infty}(\Om)})||\na\bar\v||_{L^2(\Om)}\right).
\end{array}$$
Now we  apply Proposition \ref{proposition:delta_u} to get
\begin{equation*}
\left|\Upsilon\right|\leq C\left(||\delta_{\mu}||_{H^2(\Om)}+||\delta_{\eta_{\mu}}||_{H^2(\Om)}\right)^2\left(||\u_0||_{H^2(\Om)}+||\bar\v||_{H^2(\Om)}\right).
\end{equation*}
The proof is then completed. \end{proof}

Now, it remains to identify the Fr\'echet derivatives of $J(\mu,\eta_{\mu})$. According to Theorem \ref{theorem:high_order},  the Fr\'echet derivatives  $\f{\p }{\p\mu}J(\mu,\eta_{\mu})$ and $\f{\p }{\p\eta_{\mu}}J(\mu,\eta_{\mu})$ can be computed by expressing $\Re\int_{\Omega}\u_1(\overline{\u_0-\u_m})d\x$ in terms of  $\delta_{\mu}$ and $\delta_{\eta_{\mu}}$. These are explained in the proof of Theorem \ref{Th:frechetderivative}.

\begin{proof}[Proof of Theorem \ref{Th:frechetderivative}]

We use the adjoint solution $\v$ in \eref{adjontproblem}  to get
\begin{equation}\label{adjont-use1}
\hspace{-1cm}\int_{\Omega}\u_1\cdot\overline{(\u_0-\u_m)}d\x
=\int_{\Omega}\u_1\cdot(\overline{2\na\cdot\left((\mu-i\omega\eta_{\mu})\na^s\v\right)+\na q+\rho\omega^2\v})d\x.
\end{equation}
Using the vector identity
 $\na\cdot(q\u_1)=\na q\cdot\u_1$ and  divergence free conditions (
$0=\na\cdot\delta\u=\na\cdot\u_1=\na\cdot\v$),  the identity
(\ref{adjont-use1}) can be rewritten as
\begin{equation*}
\hspace{-1cm}\int_{\Omega}\u_1\cdot\overline{(\u_0-\u_m)}d\x
=-\int_{\Omega}2(\mu+i\omega\eta_{\mu})\na^s\u_1:\na^s\bar\v d\x+\int_{\Omega}\rho\omega^2\u_1\cdot\bar\v d\x.
\end{equation*}
Since $\u_1$ satisfies the equation (\ref{perturbedsystem}), we have
$$\begin{array}{ll}
\int_{\Omega}\u_1\cdot\overline{(\u_0-\u_m)}d\x&=\int_{\Omega}[2\na\cdot\left((\mu+i\omega\eta_{\mu})\na^s\u_1\right)+\rho\omega^2\u_1]\cdot\bar\v d\x,\\
&=\int_{\Omega}[-2\na\cdot\left((\delta_{\mu}+i\omega\delta_{\eta_{\mu}})\na^s\u_0\right)+\na p_1]\cdot\bar\v d\x,\\
&=\int_{\Omega}2(\delta_{\mu}+i\omega\delta_{\eta_{\mu}})\na^s\u_0:\na^s\bar\v d\x.
\end{array}$$
This proves the formula (\ref{Eq:realfrechet}).
The formula (\ref{Eq:frechetderivative}) can be obtained directly from  Theorem \ref{theorem:high_order} and  the formula (\ref{Eq:realfrechet}).
This completes the proof. \end{proof}

Based on Theorem \ref{Th:frechetderivative}, the shear modulus and viscosity can be reconstructed by the following  gradient descent iterative scheme:
\begin{description}
  \item[~] {\rm [Step 1] Let $m=0$. Start with an initial guess of shear modulus $\mu^0$ and shear viscosity $\eta_\mu^0$.}
\item[~] {\rm [Step 2] For $m=0,1,\cdots$, compute $\u_0^m$ by solving the forward problem \eref{pressureharmonic} with $\mu$ and $\eta_\mu$ replaced by $\mu^m$ and $\eta_\mu^m$, respectively. Compute $\v^m$ by solving the adjoint problem \eref{adjontproblem} with $\mu, \eta_\mu, \u_0$ replaced by $\mu^m, \eta_\mu^m, \u_0^m$, respectively.}
  \item[~] {\rm  [Step 3]  For $m=0,1,\cdots$, compute the Fr\'echet derivatives $\f{\p J}{\p\mu}(\mu^m,\eta_\mu^m)$ and $\f{\p J}{\p\eta_{\mu}}(\mu^m,\eta_{\mu}^{m})$.}
\item[~]  {\rm  [Step 4] Update $\mu$ and $\eta_\mu$ as follows:}
\begin{equation}\label{Eq:iterationscheme}
\left\{
\begin{array}{ll}
\mu^{m+1}&=\mu^{m}-\delta\f{\p J}{\p\mu}(\mu^m, \eta_\mu^m),\\
\eta_{\mu}^{m+1}&=\eta_{\mu}^{m}-\delta\f{\p J}{\p\eta_{\mu}}(\mu^m, \eta_{\mu}^{m}).
\end{array}\right.
\end{equation}
\item[~]{\rm  [Step 5] Repeat  Steps $2,3$, and $4$ until $||\mu^{m+1}-\mu^m||\leq\epsilon$ and $||\eta_\mu^{m+1}-\eta_\mu^m||\leq\epsilon$ for a given $\epsilon>0$.}
\end{description}

\subsection{Initial guess}\label{section-initial-guess}
Numerous simulations show that the reconstruction from an adjoint-based optimization method may  converge to some local minimum that is  very different from the true solution when the initial guess is far from the true solution. We  observed that different initial guesses produce different reconstructions, and thus a good initial guess is necessary for accurate reconstruction using the iterative method (\ref{Eq:iterationscheme}).

We examine the optimization method using the initial guess obtained by the direct inversion method (\ref{Eq:directinversion}). Numerical simulations with this initial guess showed that serious reconstruction errors occur near the interfaces of different materials in the same domain; the direct inversion method cannot probe those interfaces. We found empirically that it is important to find an initial guess capturing the interfaces of different materials for the effective use of the optimization method.

To develop a method of finding such a good initial guess, we adopt the hybrid one-step method \cite{Lee2010} which consider the following simplified model  ignoring the pressure term:
\begin{equation}\label{Initial1}
 2\na\cdot (\mu+i\om\eta_{\mu})\na^s \u^\diamond+\rho\om^2 \u^\diamond~=~0\quad\mbox{in}\,\,\Om,
 \end{equation}
where $\u^\diamond$ is regarded as a good approximation of $\u[\mu,\eta_{\mu}]$.
To probe the discontinuity of $ (\mu+i\om\eta_{\mu})\na^s \u^\diamond$, we apply the Helmholtz decomposition
 \begin{equation}\label{Eq:helmholtz}
 (\mu+i\om\eta_{\mu})\na^s \u^\diamond=\na {\bf f}+\na\times{\bf W} \,\,\mbox{with}\,\,\na\cdot{\bf W}={\bf 0},
 \end{equation}
 where ${\bf f}$ and ${\bf W}$ are vector and matrix, respectively. The curl of matrix is defined in column-wise sense: $\na\times{\bf W}=\na \times(W_1, W_2, W_3)=(\na\times W_1,\na\times W_2, \na\times W_3)$, where $W_j$ is the $j$-th column of matrix ${\bf W}$ for $j=1,2,3$.
Taking dot product of (\ref{Eq:helmholtz}) with $\na^s\u^\diamond$ gives the following formula
 \begin{equation}\label{Eq:initial}
 \mu+i\om\eta_{\mu}=\f{\na {\bf f}:\na^s\bar \u^\diamond}{|\na^s \u^\diamond|^2}+\f{\na\times {\bf W}:\na^s \bar \u^\diamond}{|\na^s \u^\diamond|^2}.
 \end{equation}
By taking the divergence to the equation \eref{Eq:helmholtz}, we have
 \begin{equation}\label{Eq:helmholtz1}
\Delta {\bf f} = - \frac{1}{2} \rho\om^2 \u^\diamond \quad \mbox{in } \Omega.
 \end{equation}
 By taking the curl operation to the equation \eref{Eq:helmholtz}, we have
 \begin{equation}\label{Eq:helmholtz2}
\Delta {\bf W} = \nabla \times ((\mu+i\om\eta_{\mu}) \nabla^s \u^\diamond )
\quad \mbox{in } \Omega. \end{equation}

 Our proposed method for determining the initial guess is based on the modifying of hybrid one-step method.
Using (\ref{Eq:helmholtz1}), an approximation of the vector potential ${\bf f}$ corresponding to the measurement $\u_m$ can be computed by
\begin{equation}\label{Eq:f-tilde}
\left\{
\begin{array}{ll}
\Delta {\bf\widetilde f} = - \frac{1}{2} \rho\om^2 \u_m \quad &\mbox{in } \Omega,\\
\nabla {\bf \widetilde f} \, \n =  (\mu_0+i\om\eta_{\mu_0}) \nabla^s \u_m \, \n  \quad &\mbox{on } \partial \Omega.
\end{array}\right.
\end{equation}
On the other hand,   ${\bf W}$ can not be computed  directly from $\u_m$  since (\ref{Eq:helmholtz2}) contains unknown terms  $\mu$ and $\eta_{\mu}$.
Regarding $\mu+i\om\eta_{\mu}$  in (\ref{Eq:helmholtz2}) as  $\f{\na {\bf\widetilde f}:\na^s\bar \u_m}{|\na^s \u_m|^2}$  (see \eref{Eq:initial}), we can compute a rough approximation of ${\bf W}$ by solving
\begin{equation}\label{Eq-W1}
 \left\{
 \begin{array}{ll}
\Delta {\bf W}_1 = \nabla \times (\f{\na {\bf\widetilde f}:\na^s\bar \u_m}{|\na^s \u_m|^2} \nabla^s \u_m)
\quad &\mbox{in } \Omega,\\
{\bf W}_1 = {\bf 0}
\quad &\mbox{on } \partial \Omega.
\end{array}\right.
\end{equation}
Similarly,  approximating $\mu+i\om\eta_{\mu}$  by direct inversion formula \eref{Eq:directinversion}, we can compute ${\bf W}$ by solving
\begin{equation}\label{Eq-W2}
\left\{
\begin{array}{ll}
\Delta {\bf W}_2 = \nabla \times (-\f{\rho\omega^2 (\a\cdot \u_m)}{\na\cdot\na (\a\cdot\u_m)} \nabla^s \u_m)
\quad &\mbox{in } \Omega,\\
{\bf W}_2 = {\bf 0}
\quad &\mbox{on } \partial \Omega,
\end{array}\right.
\end{equation}
where ${\bf a}$ is any nonzero vector.

Now, we use the formula  \eref{Eq:initial} to get  the initial guess of shear modulus by substituting ${\bf f=\tilde f}$, ${\bf W}=({\bf W}_1+{\bf W}_2)/2$ and $\u^\diamond=\u_m$:
 \begin{equation}\label{Eq:initial2}
 \mu^0+i\om\eta_{\mu}^0=\f{\na {\bf\widetilde f}:\na^s\bar \u_m}{|\na^s \u_m|^2}+\f{\na\times ({\bf W}_1+{\bf W}_2):\na^s \bar \u_m}{2|\na^s \u_m|^2}.
 \end{equation}
 In  formula \eref{Eq:initial2}, the first term provides information in the wave propagation direction while the second term gives the information in the tangent direction of the wave propagation as shown in \cite{Lee2010}.
Note that if this initial guess is not satisfactory for the adjoint-based optimization problem, one can update the initial guess formula to obtain more accurate one by replacing $(\mu+i\om\eta_{\mu})$ in (\ref{Eq:helmholtz2}) by (\ref{Eq:initial2}).

Numerical experiments demonstrates the possibility of probing the discontinuity of the shear modulus effectively. We emphasize that the initial guess plays an important role in  Newton's iterative reconstruction algorithm based on the adjoint approach. By observing the adjoint problem (\ref{adjontproblem}), the load term $\u_0-\u_m$ is related to the measured data and the initial guess in the first iteration step. If the initial guess ensure that $||\u_0-\u_m||$ is small in certain norm, the iteration scheme will converge and give good results. Otherwise, the initial guess makes $||\u_0-\u_m||$ far  from 0 in certain norm, and the iteration scheme may not converge. This will be  discussed in section \ref{section-numerical}.

\subsection{Local reconstruction}\label{Section-local}
In MRE, the time-harmonic displacement, $\u_m$, in the tissue is measured via  phase-contrast-based MR imaging. Hence, the signal-to-noise ratio (SNR) of the data is related to that of the MR phase images, which varies from one region to another. For example, the SNR of data $\u_m$ is very low in MR-defected regions, including the lungs, outer layers of bones, and some gas-filled organs. When the domain, $\Om$, contains such defected regions, the reconstructed image qualities may be  seriously degraded by locally low SNR data in the defected regions. As a result, it would be desirable to exclude defected regions from $\Om$ to prevent errors spreading in the image reconstruction.

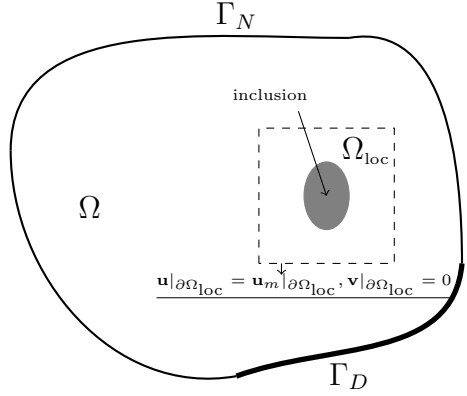
\begin{figure}[h!]
\centering
\begin{tikzpicture}[scale=1.5]
  \draw[dashed](2.2,-1)rectangle(3.4,0.2);
  \node at(3.15, 0){$\Omega_{\hbox{\tiny loc}}$};
  \draw[thick] (0,0)to[out=-270,in=180](3,1);
  \draw[thick](3,1)to[out=10,in=90](4,-1);
  \draw[thick](2,-2)to[out=-170,in=270](0,0);
  \draw[line width=2pt](4,-1)to[out=-95,in=20](2,-2);
  \draw[draw=gray,fill=gray] (2.8,-0.4)ellipse(0.2 and 0.3);
  \node at(2.3,0.5){\tiny inclusion};
  \draw[->](2.55,0.35)--(2.8,-0.4);
  \node at(0.7,-0.5){$\Omega$};
  \node at(3,-2){$\Gamma_D$};
  \node at(2,1.2){$\Gamma_N$};
  \node at(2.6,-1.2){\tiny $\underline{{\bf u}|_{\p\Omega_{\hbox{\tiny loc}}}={\bf u}_m|_{\p\Omega_{\hbox{\tiny loc}}},{\bf v}|_{\p\Omega_{\hbox{\tiny loc}}}=0}$};
  \draw[->](2.4,-1)--(2.4,-1.1);
\end{tikzpicture}
\caption{Illustration of the localization of the small anomaly in certain subdomain.}\label{Fig:localization_1}
\end{figure}

The proposed method is capable of a local reconstruction by restricting to a local domain of the interest. To be precise, let  $\Omega_{\mbox{\tiny loc}}$ be a subdomain of $\Om$ in which  $\u_m$ has high SNR. Then, we consider the localized minimization problem
\begin{equation}\label{minproblem-loc}
 J_{\mbox{\tiny loc}}(\mu, \eta_{\mu})=\f{1}{2}\int_{\Omega_{\mbox{\tiny loc}}}|\u_{\mbox{\tiny loc}}[\mu,\eta_{\mu}]-\u_m|^2d\x \end{equation}
with $\u_{\mbox{\tiny loc}}[\mu,\eta_{\mu}]$ being the solution of
\begin{equation}\label{localdomain}
\left\{
\begin{array}{ll}
2\na\cdot\left((\mu+i\omega\eta_{\mu})\na^s\u\right)+\na p+\rho\omega^2\u=0  \quad&\hbox{in}\,\,\Omega_{\mbox{\tiny loc}},\\
\na\cdot\u=0\quad&\hbox{in}\,\,\Omega_{\mbox{\tiny loc}},\\
\u=\u_m \quad&\hbox{on}\,\,\p\Omega_{\mbox{\tiny loc}}.
\end{array}\right.
\end{equation}
As before, we need to compute the corresponding adjoint problem to get Fr\'echet derivative:
\begin{equation}\label{localadjont}
\left\{
\begin{array}{ll}
2\na\cdot\left((\mu-i\omega\eta_{\mu})\na^s\v\right)+\na q+\rho\omega^2\v=\u_{0,\mbox{\tiny loc}}-\u_m\quad &\hbox{in}\,\,\Omega_{\mbox{\tiny loc}},\\
\na\cdot\v=0 \quad&\hbox{in}\,\,\Omega_{\mbox{\tiny loc}},\\
\v=0\quad &\hbox{on}\,\,\p\Omega_{\mbox{\tiny loc}}.
\end{array}\right.
\end{equation}
There is no difference between the local reconstruction in $\Omega_{\mbox{\tiny loc}}$ and  the global reconstruction with $\Om$, except the boundary conditions. As in \eref{Eq:iterationscheme}, the local reconstruction can be done by solving \eref{localdomain} and \eref{localadjont} with the initial guess \eref{Eq:initial2}.   Local reconstruction requires that neither the boundary conditions need to be used  on the whole domain, $\Omega$,  nor that the exact shape of $\Omega$ needs to be known. Numerical simulations verify  the effectiveness of this local reconstruction, and  will be discussed in section \ref{section-numerical}.

\section{Numerical simulations}\label{section-numerical}
In this section, we perform several numerical experiments to illustrate the effectiveness of the shear viscoelasticity reconstruction algorithm  proposed in the previous section.

To implement the reconstruction algorithm (\ref{Eq:iterationscheme}) proposed in section \ref{section-reconstruction}, we
use the algorithm (\ref{Eq:initial}) in section \ref{section-initial-guess} to initialize the iteration scheme.  For numerical experiments, we set the two dimensional domain as $\Om =[0,10] \times [0,10]$ cm$^2$ with a boundary denoted by $\p\Om=\Gamma_{D}\cup\Gamma_{N}$; see figure \ref{Fig-displacement-model1} (a).
We apply the FEM method in Matlab (MathWorks In.) to solve the forward problem (\ref{pressureharmonic}) as well as the adjoint problem (\ref{adjontproblem}) at each iteration step in the algorithm (\ref{Eq:iterationscheme}).

We set three different types of shear viscoelasticity distribution which are shown in the first column of figure \ref{Fig-reconstruction-model123} along with the true distribution of shear modulus and shear viscosity. The first and second rows are model 1, the third and fourth rows are model 2, and the fifth and sixth rows are model 3. For each model, the upper row shows elasticity while the lower row shows viscosity. Our numerical experiments are based on these three models. We generate two dimensional displacements $\u_m=(u_1, u_2)^t$ by solving the problem (\ref{pressureharmonic}) with frequency $\f{\om}{2\pi}$=70Hz and density $\rho=1g\cdot cm^{-2}$. We apply the vibration to $\Gamma_{D}$, and  the other three sides boundaries are set to be traction free:
 \begin{equation}\label{pressureharmonic}
\left\{
\begin{array}{ll}
\u=(0.3, 0.3) \quad &\hbox{on}\,\, \Gamma_D,\\
2(\mu+i\omega\eta_{\mu})\na^s\u \, \n+p \n=0\quad &\hbox{on} \,\, \Gamma_N.
\end{array}\right.
\end{equation}
 For example,   model 1 has the displacement fields  shown in figure \ref{Fig-displacement-model1} where (b) and (c) are real parts of $u_1$ and $u_2$, and (d) and (e) are imaginary parts of $u_1$ and $u_2$, respectively.

\begin{figure}[!h]
\centering
\setlength{\tabcolsep}{2pt}
\begin{tabular}{ccccc}
\includegraphics[height=2.25cm]{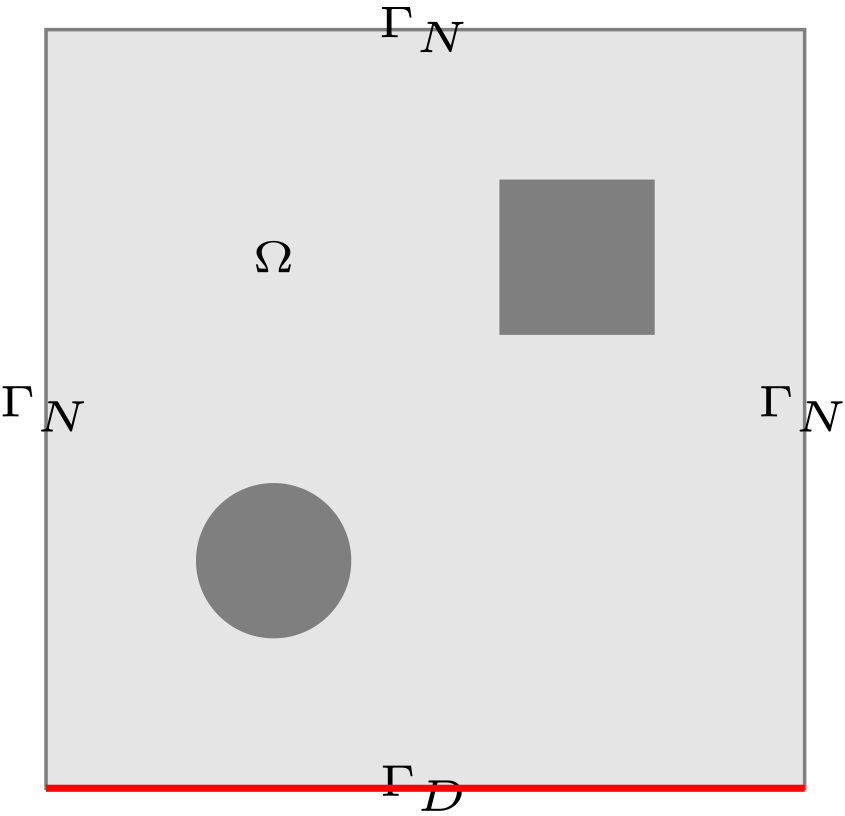}&
\includegraphics[height=2.2cm]{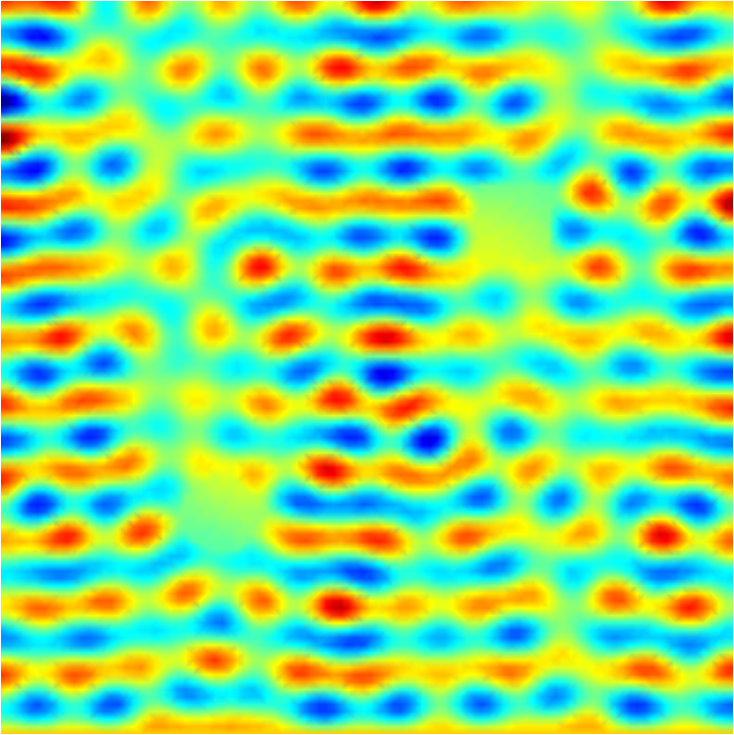}&
\includegraphics[height=2.2cm]{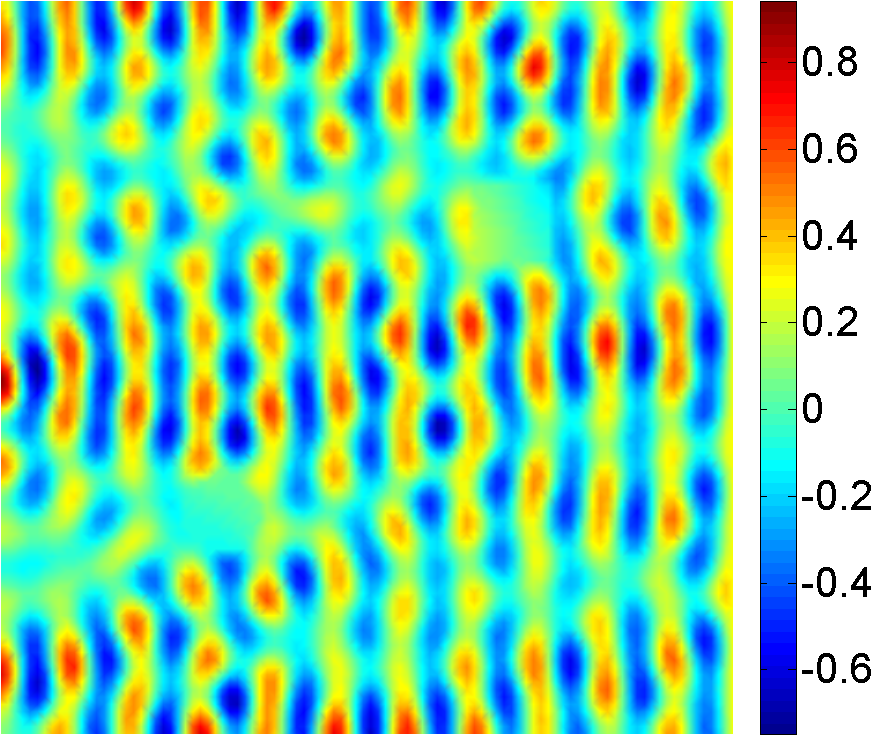}&
\includegraphics[height=2.2cm]{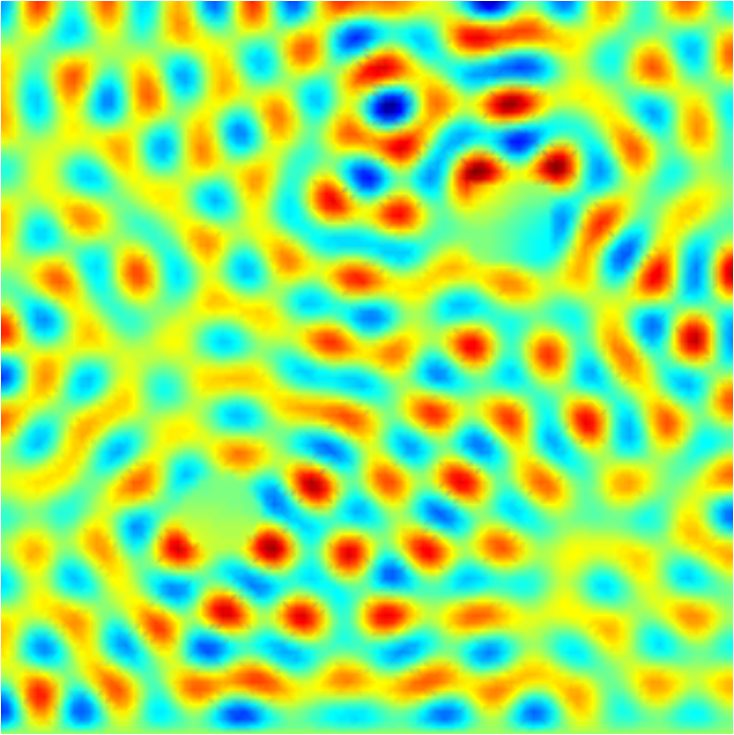}&
\includegraphics[height=2.2cm]{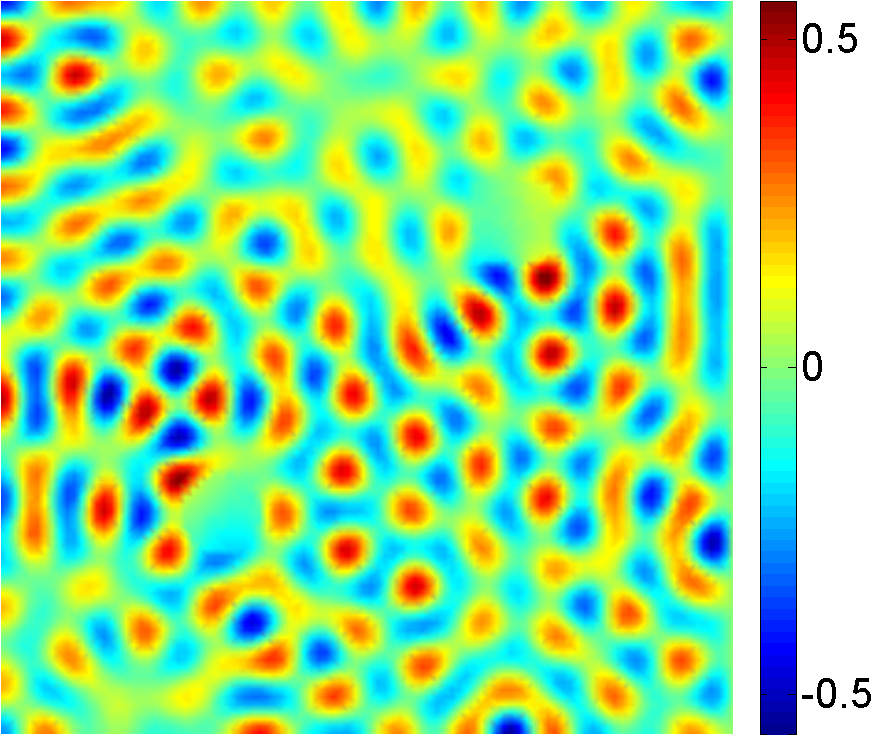}\\
(a)&(b)&  (c)& (d) & (e)
\end{tabular}
\caption{Model 1 and the displacement fields. (a) Model 1; (b) and (c) are real parts of $u_1$ and $u_2$; (e) and (f) are imaginary parts of them, respectively.}
\label{Fig-displacement-model1}
\end{figure}
The next step is to implement our algorithm making use of these displacement fields with certain initial guesses of the distribution of viscoelasticity.
We generate the initial guess by the direct inversion method (\ref{Eq:directinversion}) shown in the third column of  figure \ref{Fig-reconstruction-model123} and the hybrid one-step method (\ref{Eq:initial}) shown in the fifth column of figure \ref{Fig-reconstruction-model123}. From the generated initial guess, we can see that the reconstruction by the hybrid one-step method is much better than that of the direct inversion method in catching the inhomogeneous property of the medium. We have already explained the underlying mathematical reason for this phenomenon. We  use the initial guesses from these two methods to initialize our proposed method, and the corresponding numerical results for each model are shown in the fourth column and last column of  figure \ref{Fig-reconstruction-model123}, respectively. For comparison, we also show the reconstruction with a homogeneous initial guess in each second column of figure  \ref{Fig-reconstruction-model123}.

The reconstruction results (see figure  \ref{Fig-reconstruction-model123}) show that the proposed method can reconstruct the viscoelasticity distribution with high accuracy (see (f) column) using a well-matched initial guess ( see (e) column). Otherwise, poor initial guesses (for example, the homogeneous initial guess and (c)), leads to unsatisfactory reconstructed images (see (b) and (d) columns).

\begin{figure}[!h]
\centering
\setlength{\tabcolsep}{2pt}
\begin{tabular}{cccccc}
\includegraphics[height=2cm]{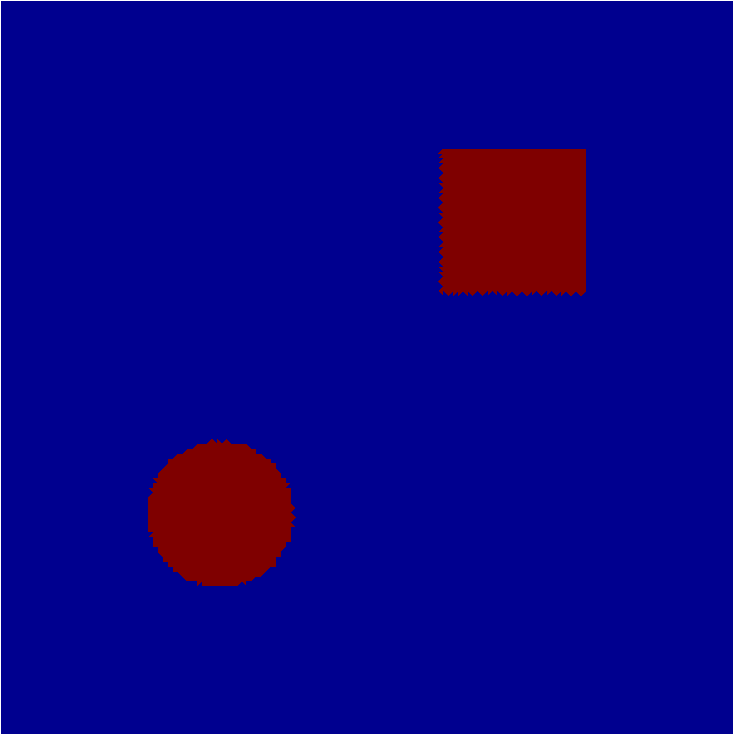}&
\includegraphics[height=2cm]{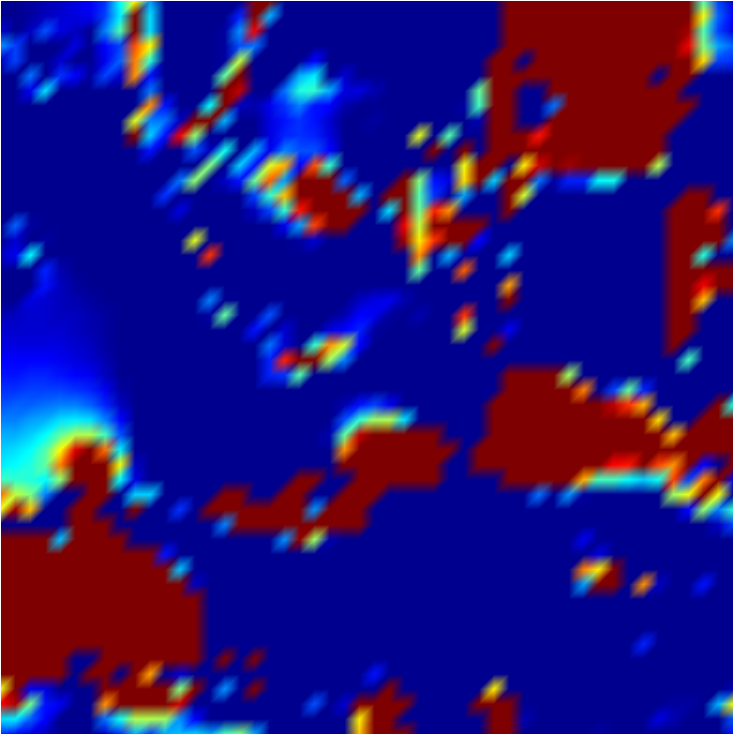}&
\includegraphics[height=2cm]{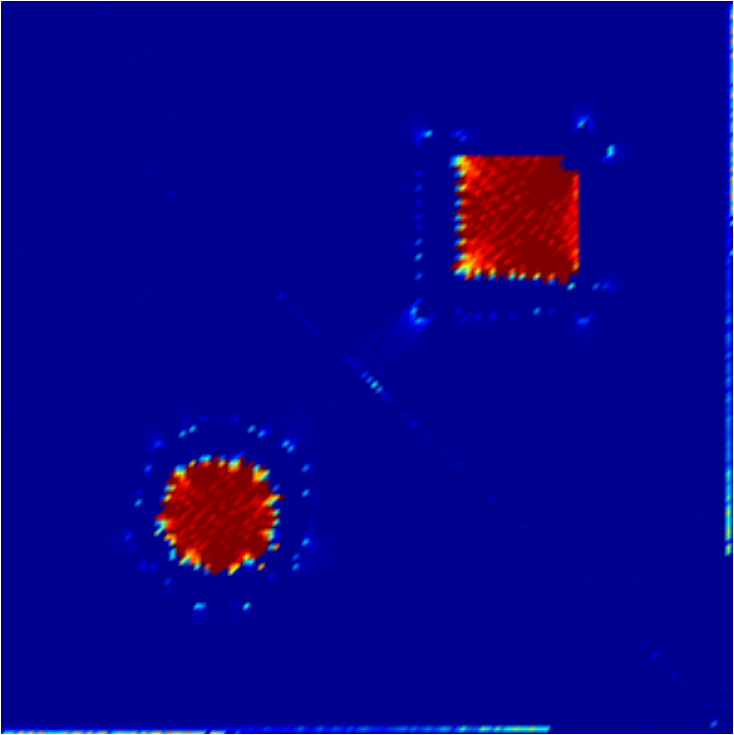}&
\includegraphics[height=2cm]{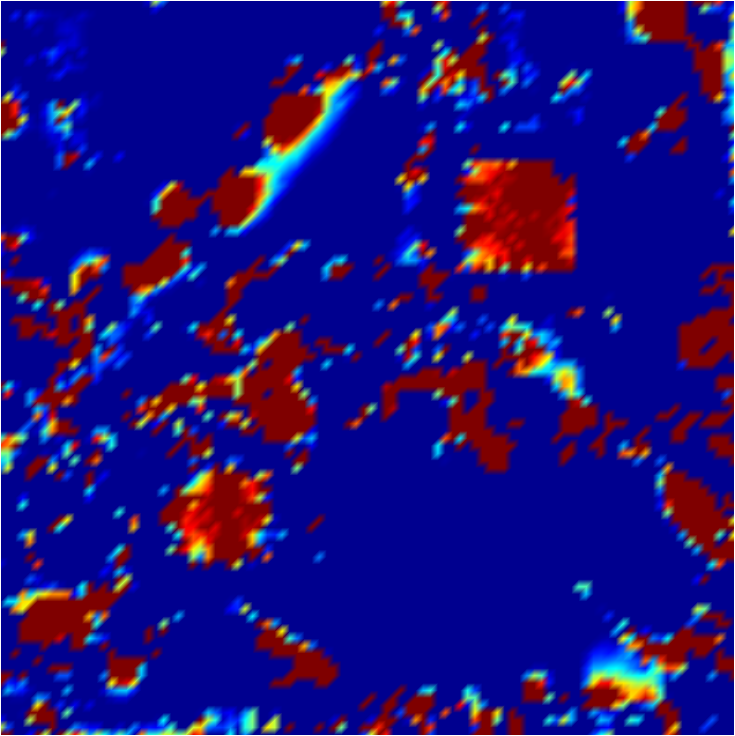}&
\includegraphics[height=2cm]{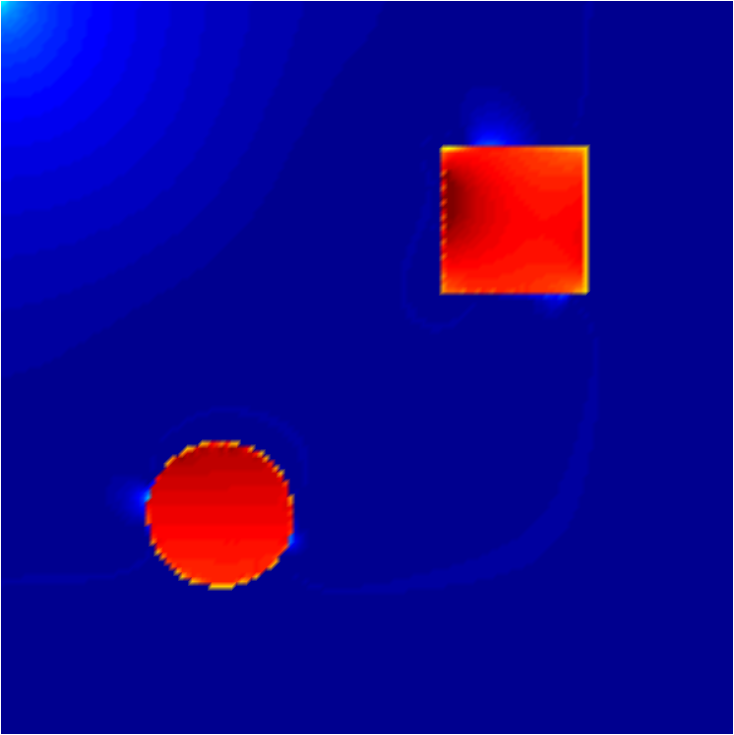}&
\includegraphics[height=2cm]{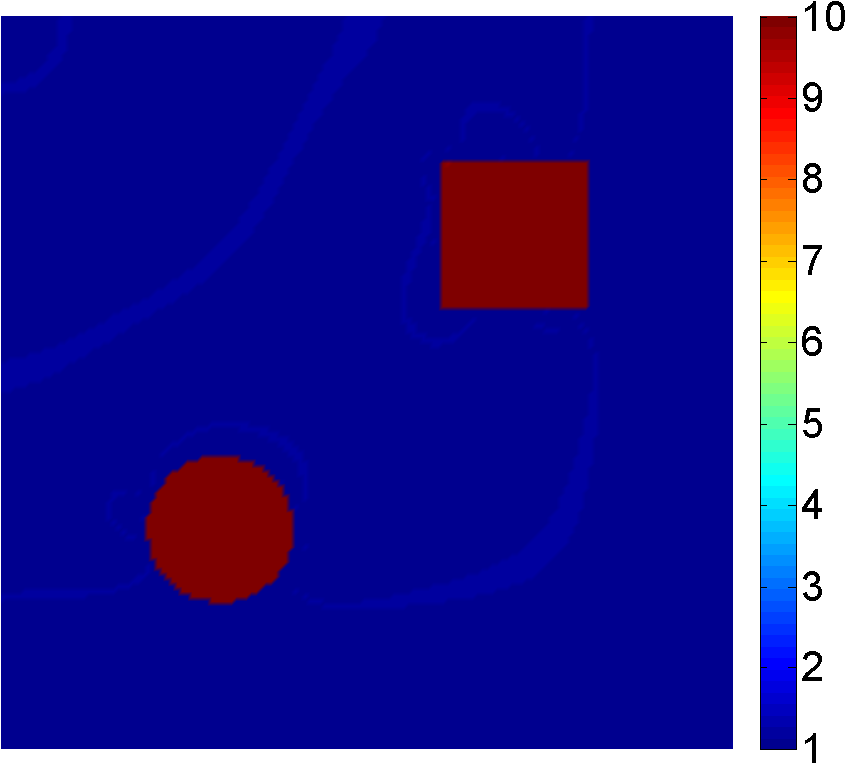}\\
\includegraphics[height=2cm]{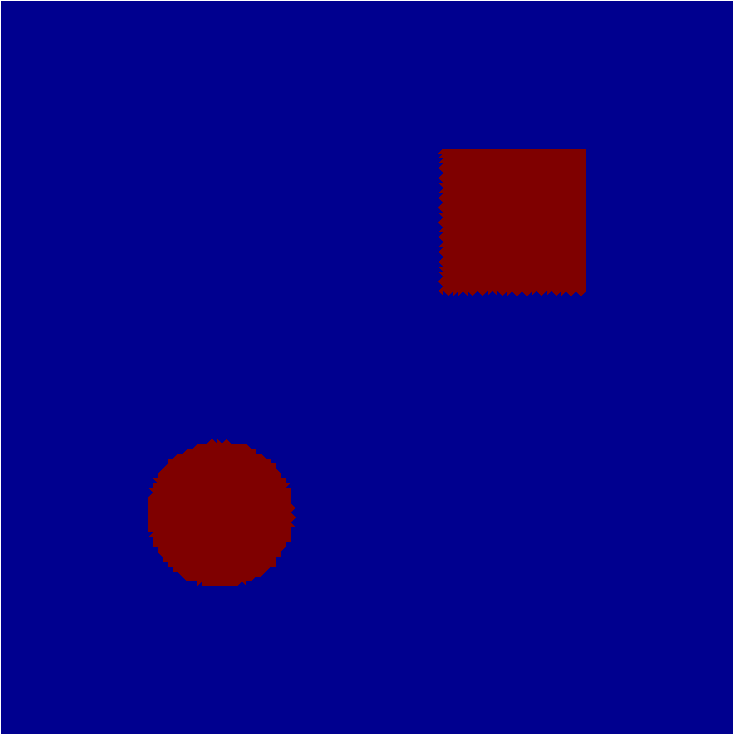}&
\includegraphics[height=2cm]{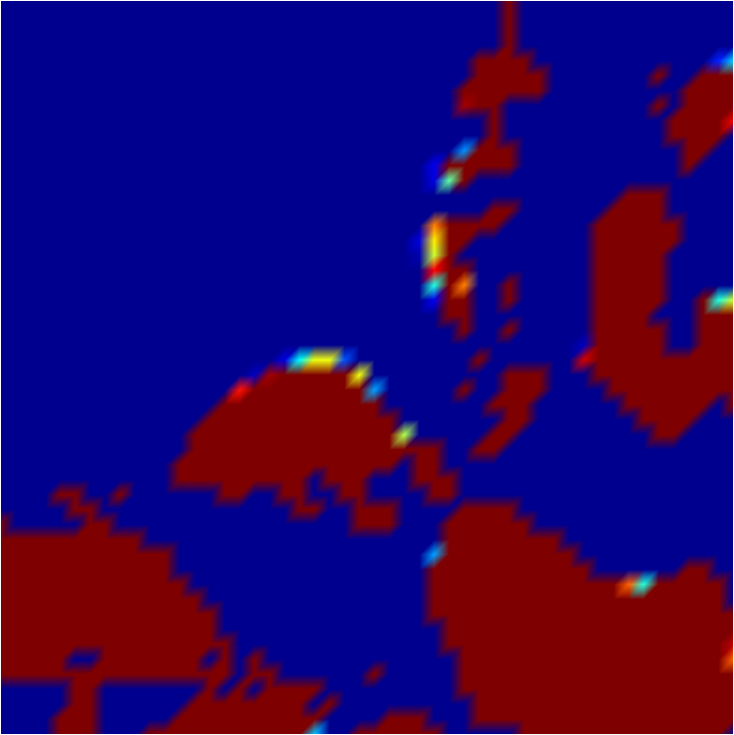}&
\includegraphics[height=2cm]{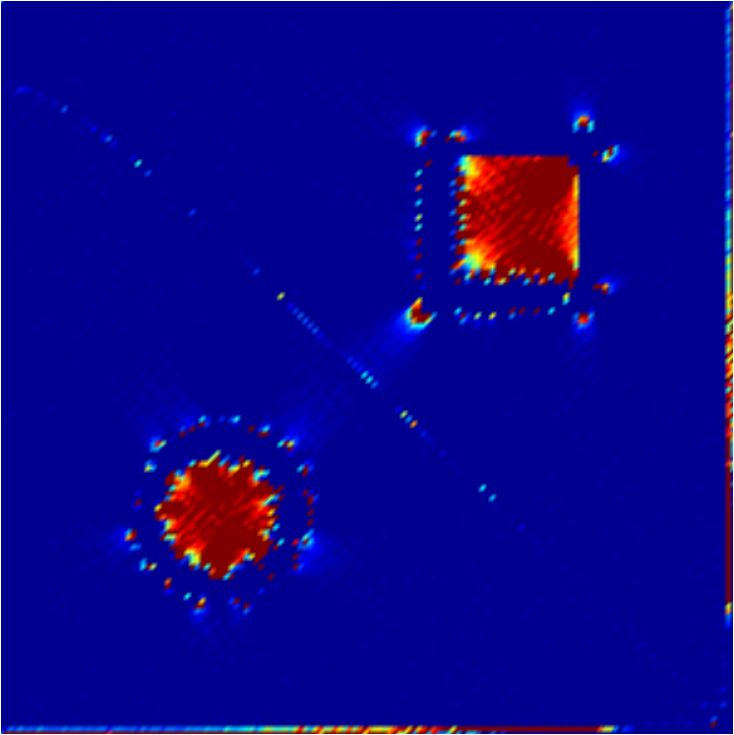}&
\includegraphics[height=2cm]{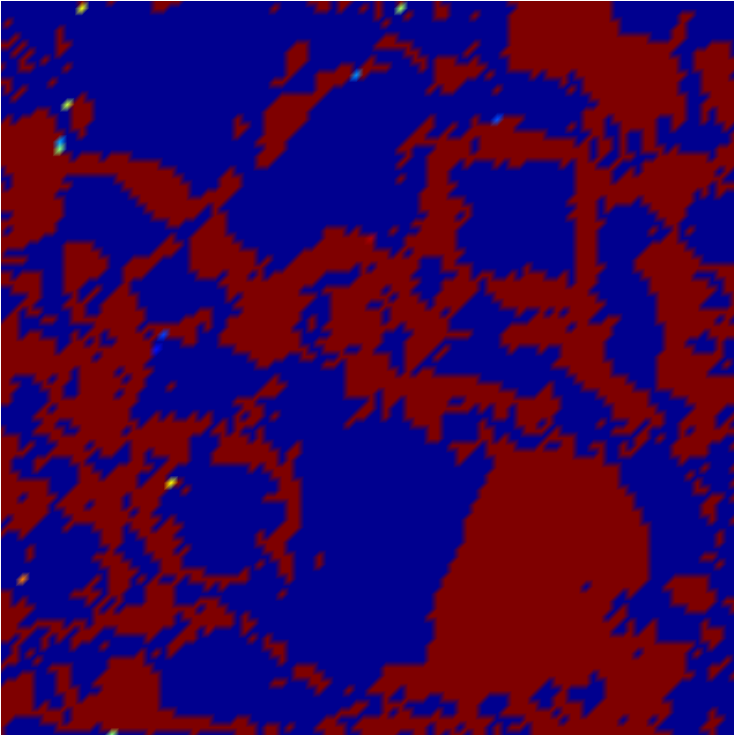}&
\includegraphics[height=2cm]{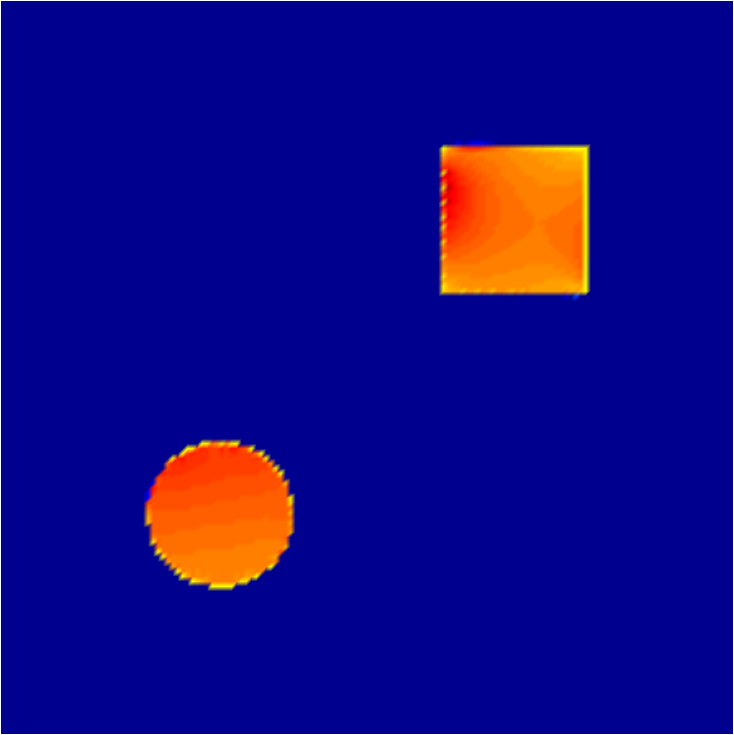}&
\includegraphics[height=2.1cm]{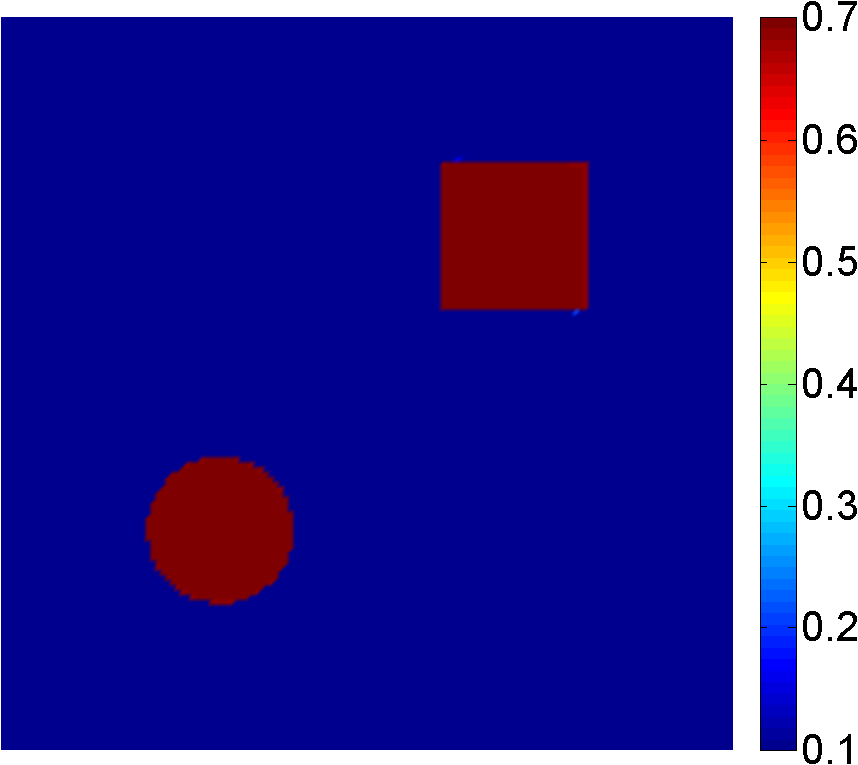}\\
\includegraphics[height=2cm]{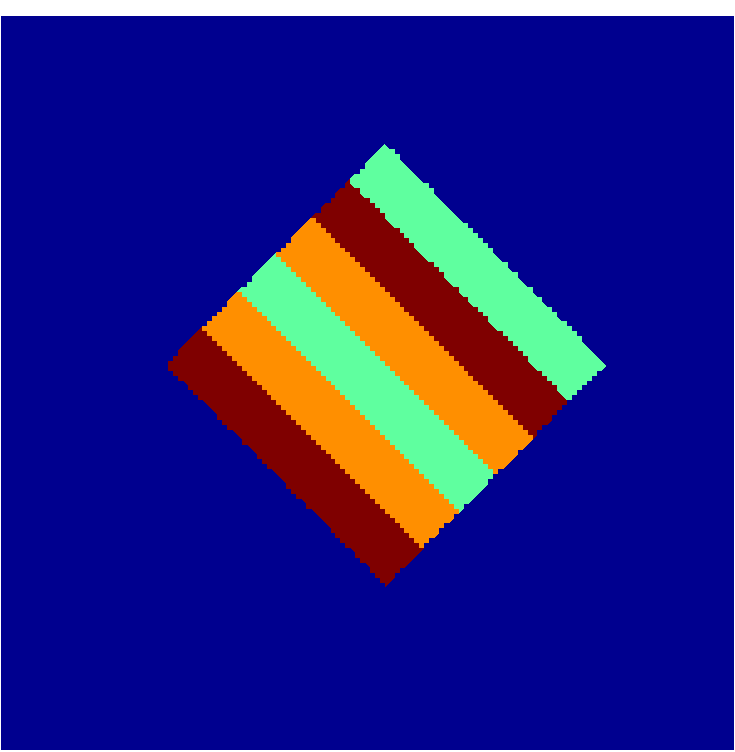}&
\includegraphics[height=1.95cm]{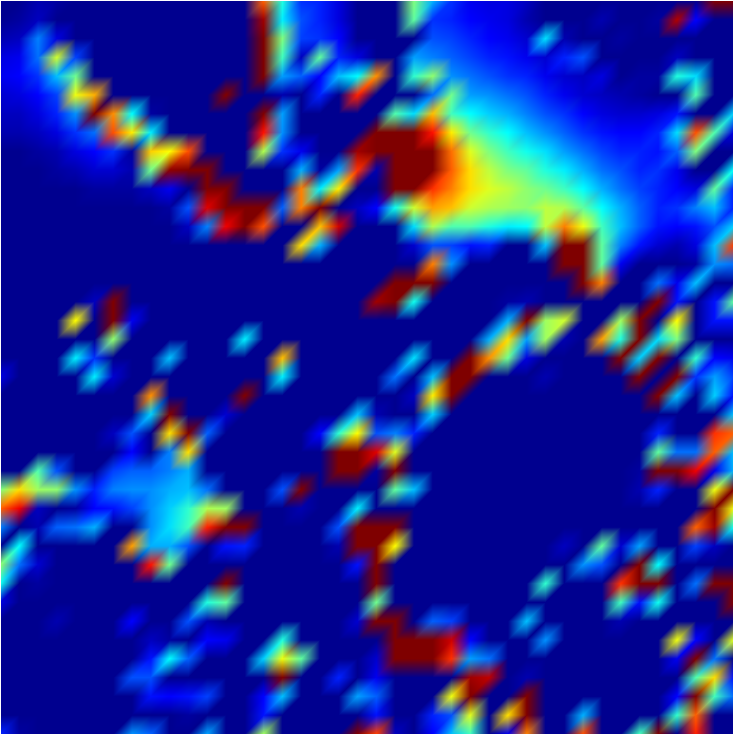}&
\includegraphics[height=2cm]{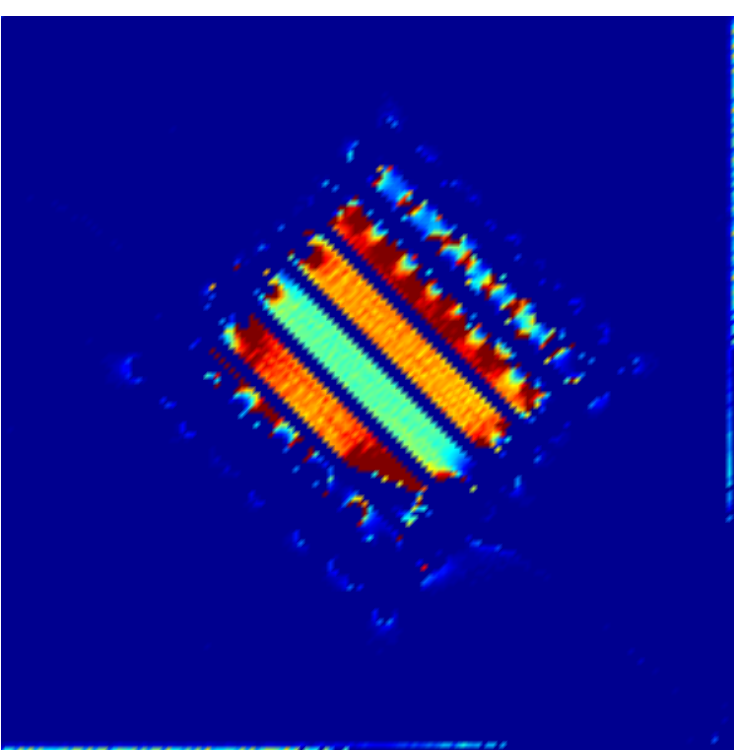}&
\includegraphics[height=1.95cm]{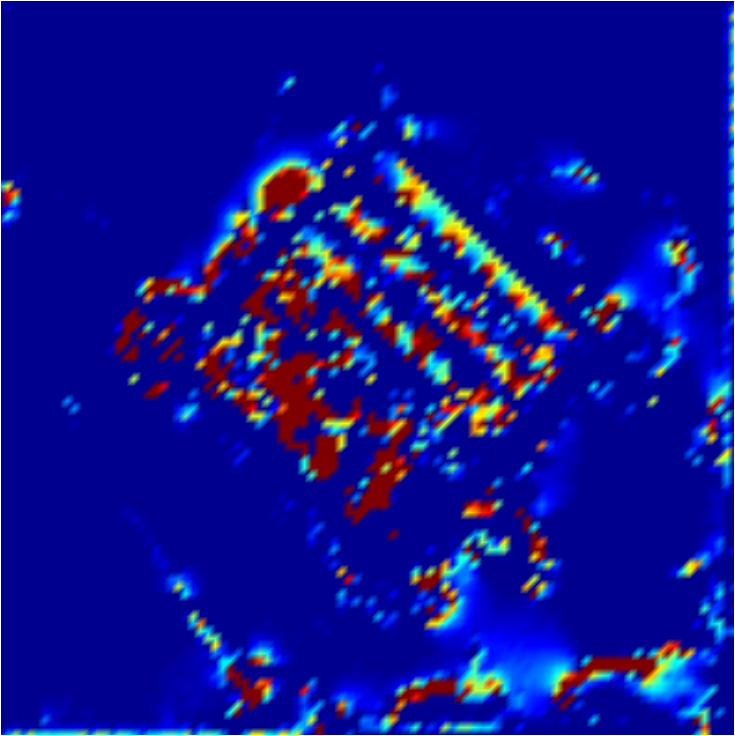}&
\includegraphics[height=2cm]{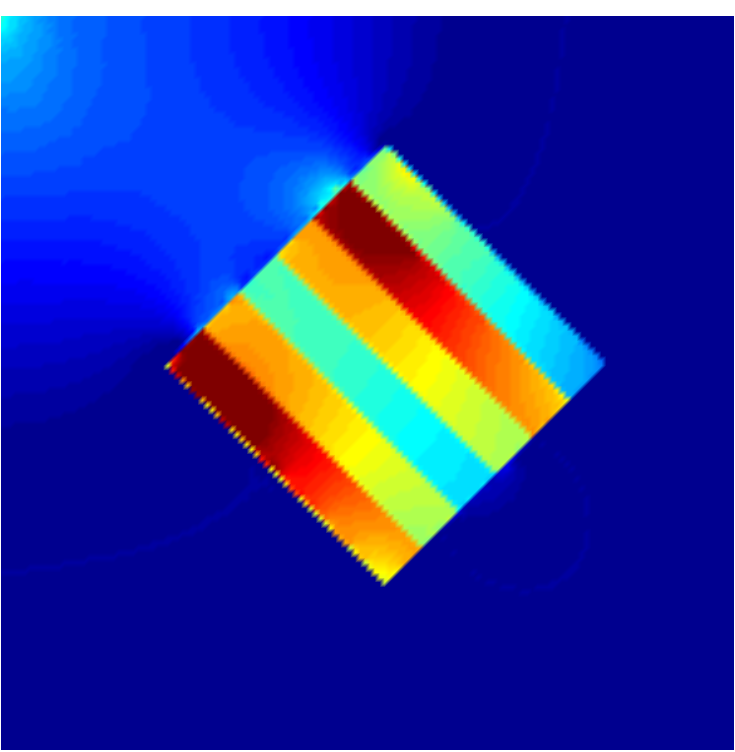}&
\includegraphics[height=2cm]{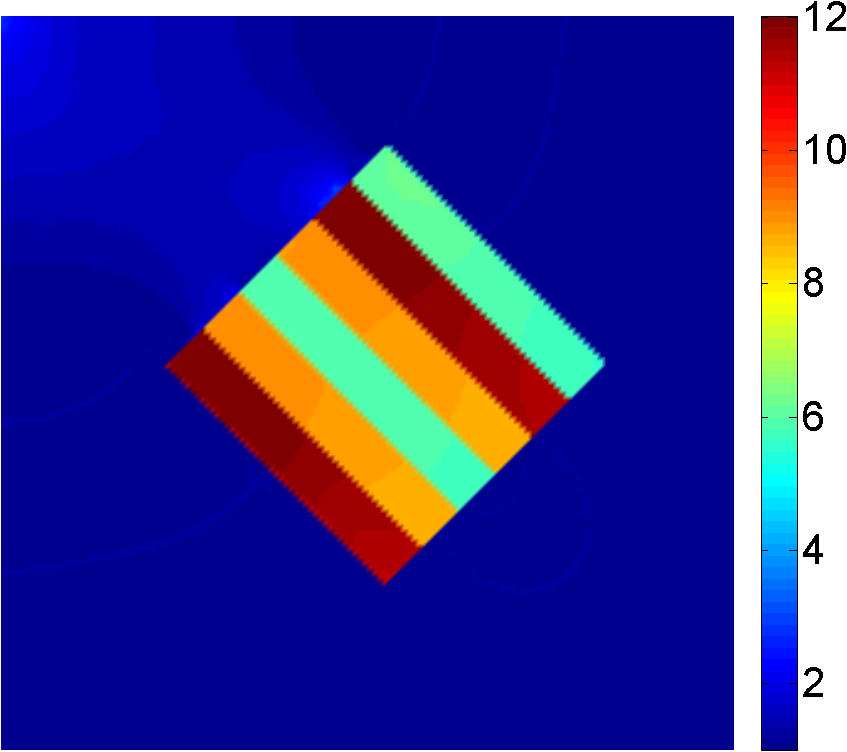}\\
\includegraphics[height=2cm]{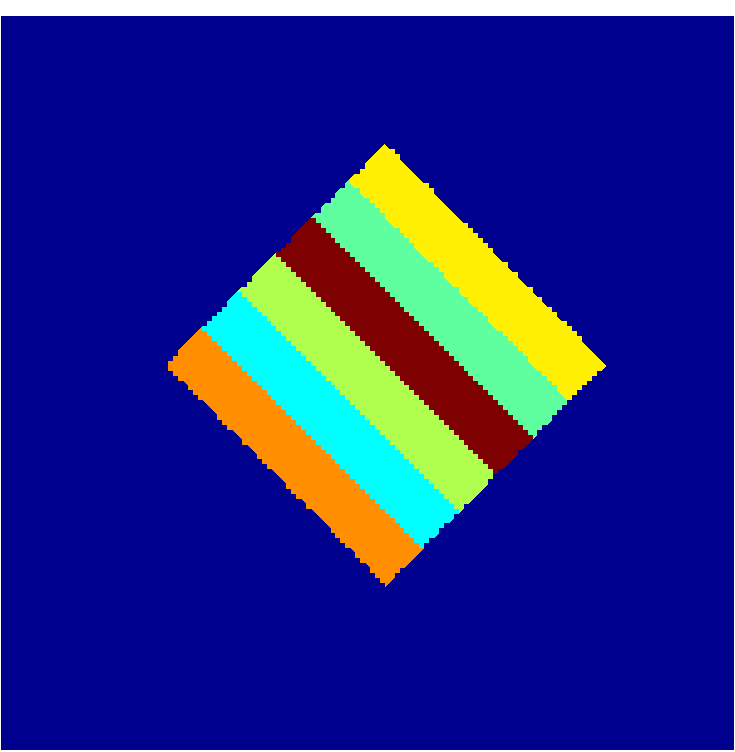}&
\includegraphics[height=1.95cm]{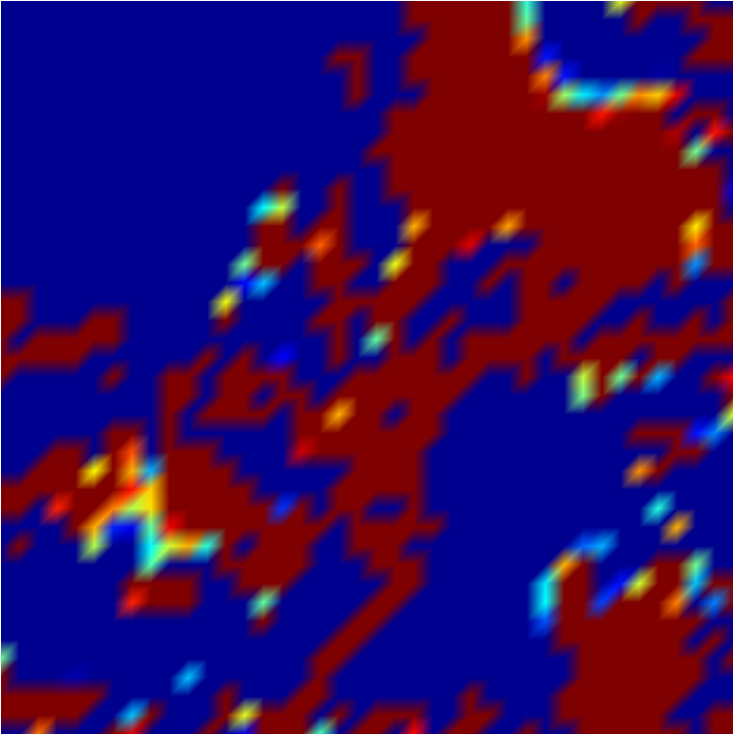}&
\includegraphics[height=2cm]{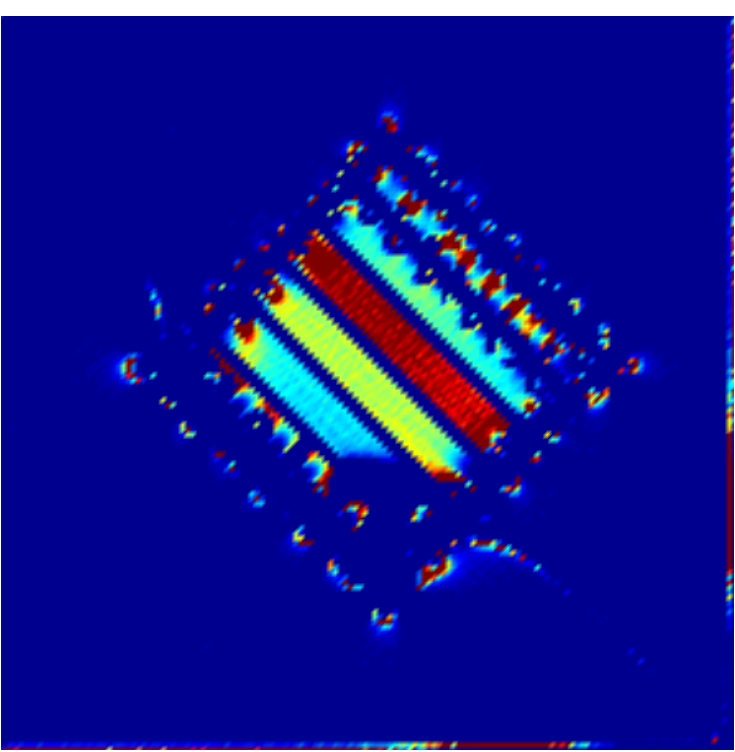}&
\includegraphics[height=1.95cm]{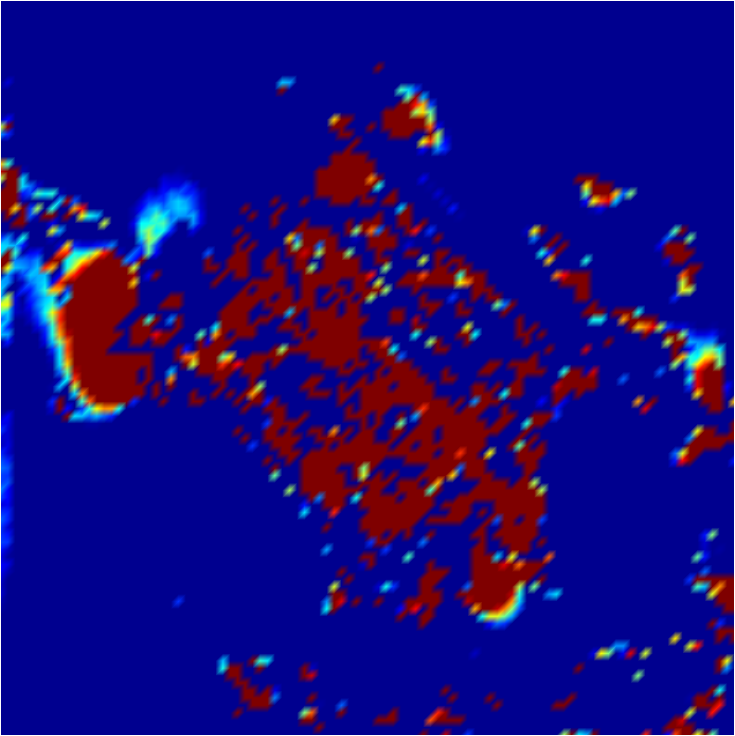}&
\includegraphics[height=2cm]{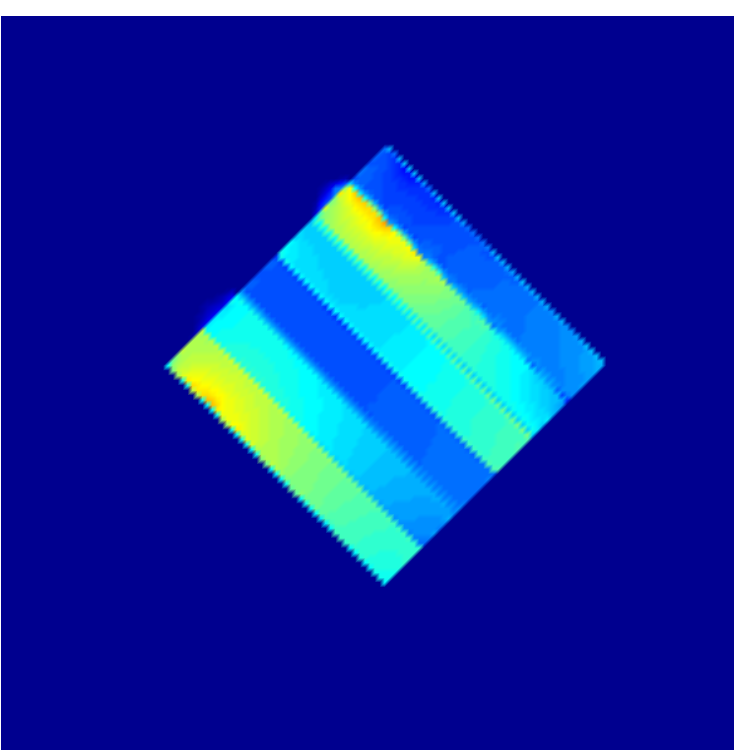}&
\includegraphics[height=2cm]{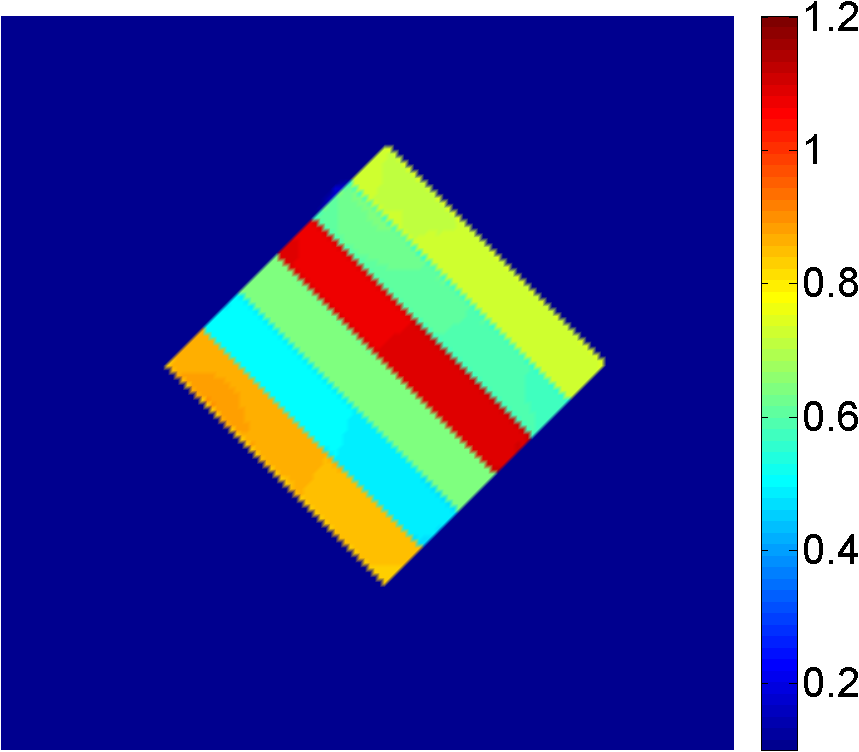}\\
\includegraphics[height=2cm]{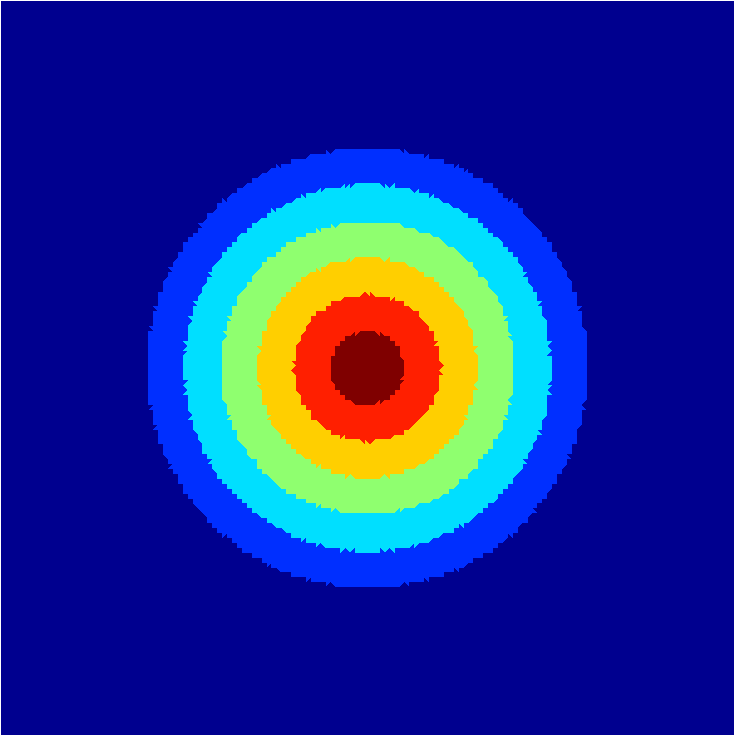}&
\includegraphics[height=2cm]{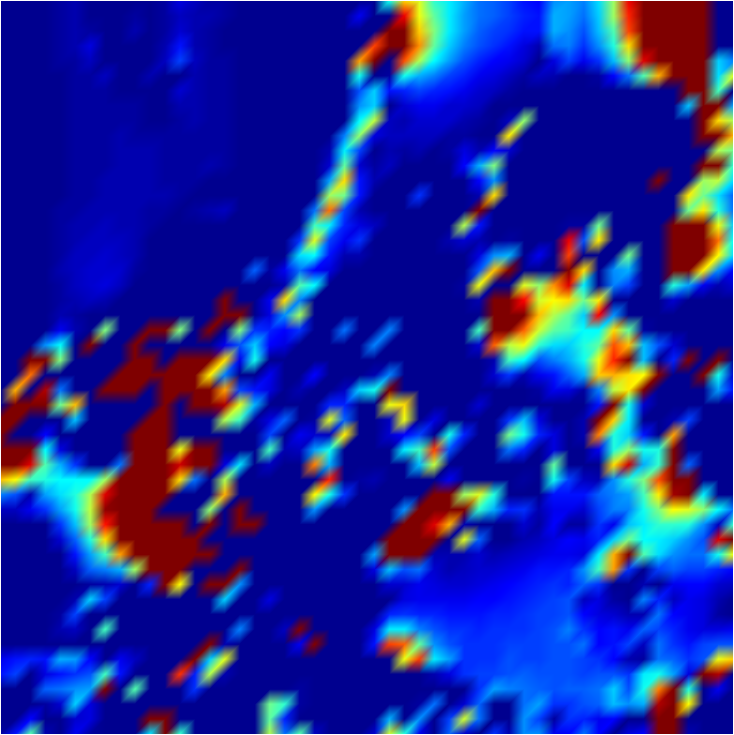}&
\includegraphics[height=2cm]{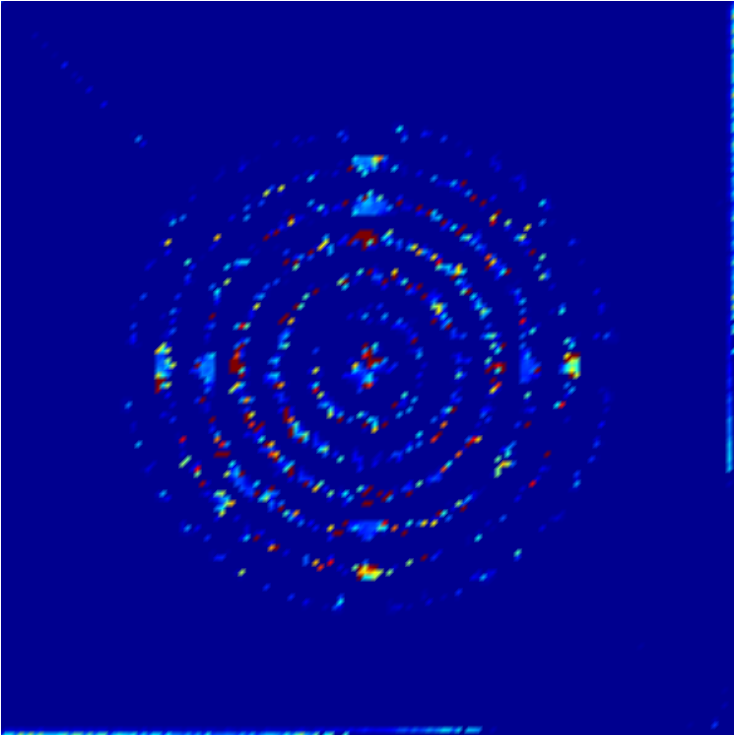}&
\includegraphics[height=2cm]{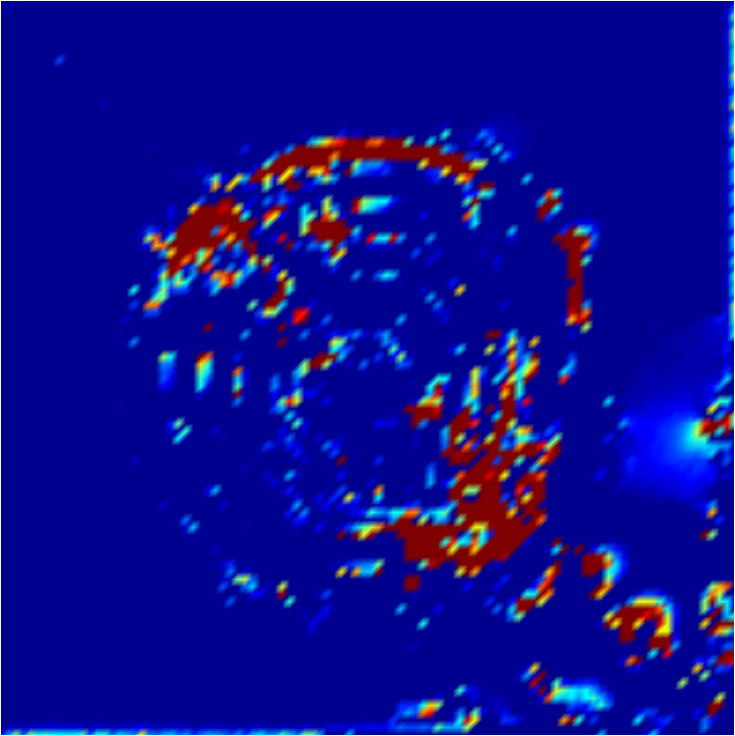}&
\includegraphics[height=2cm]{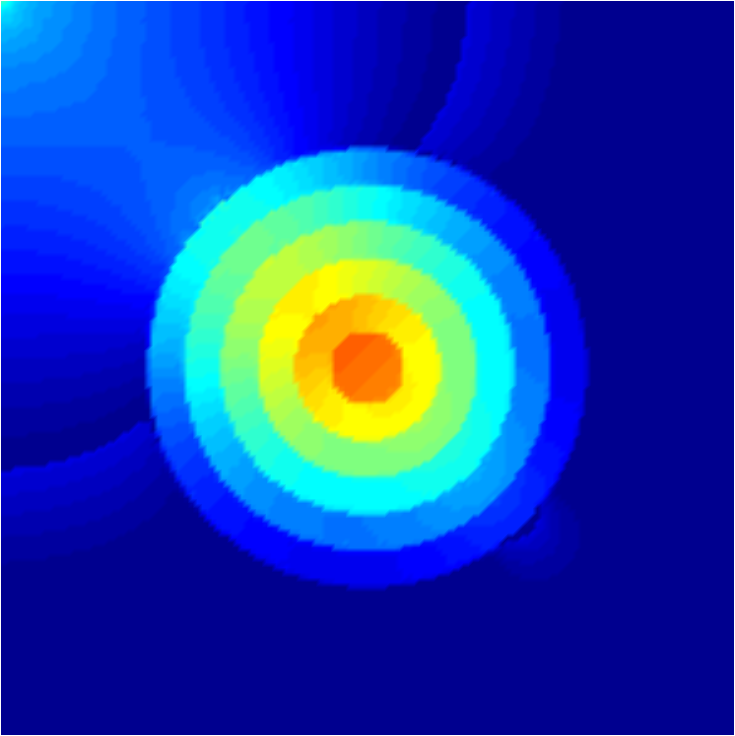}&
\includegraphics[height=2cm]{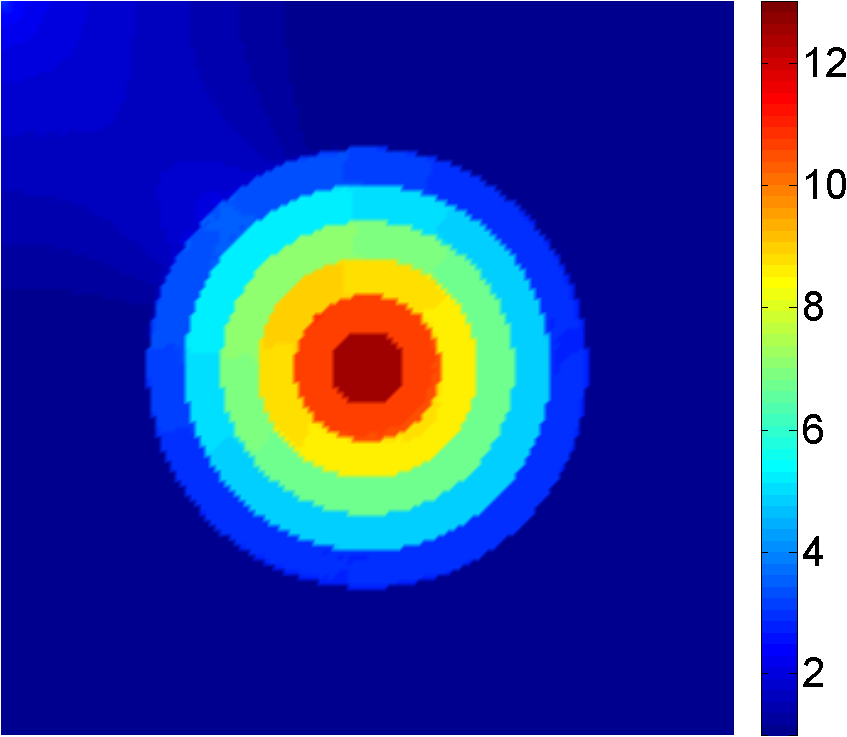}\\
\includegraphics[height=2cm]{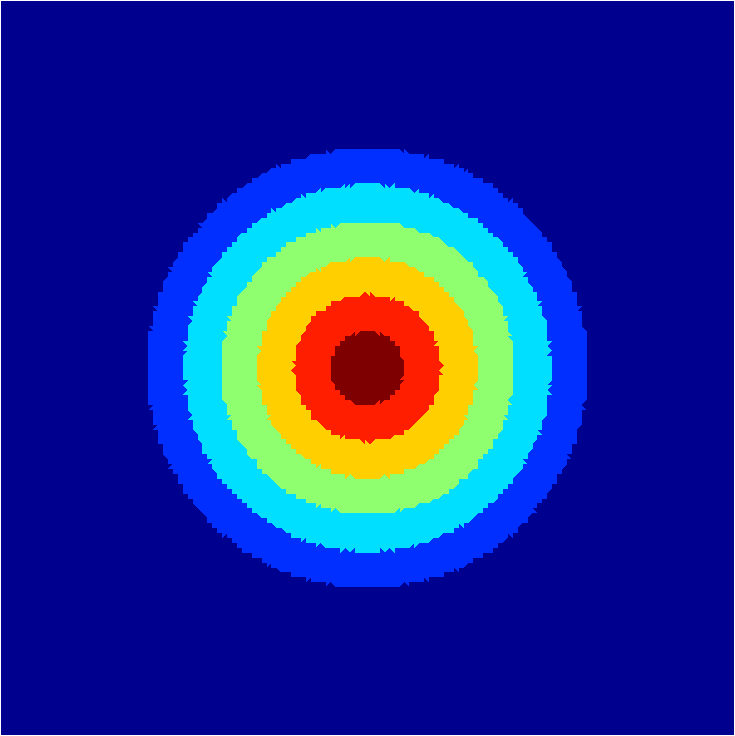}&
\includegraphics[height=2cm]{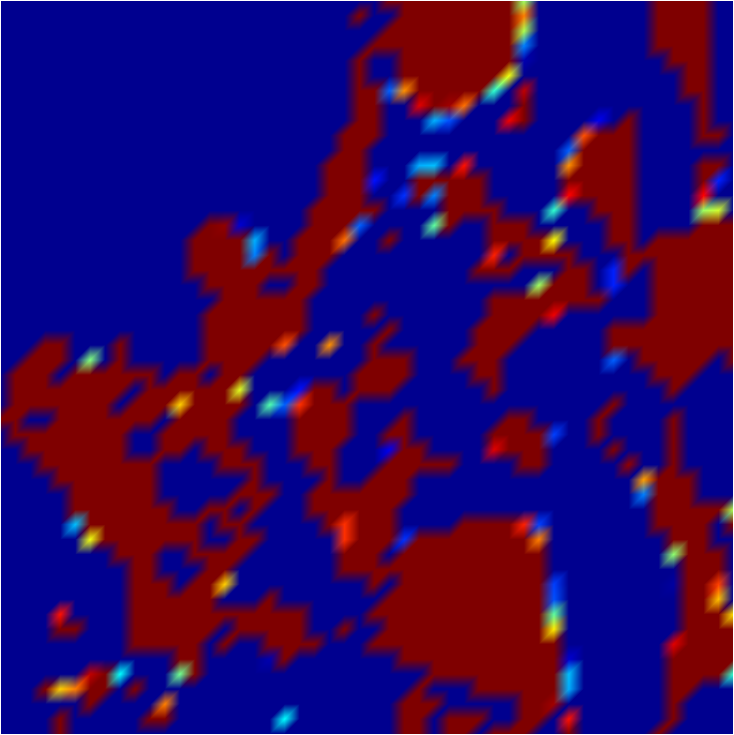}&
\includegraphics[height=2cm]{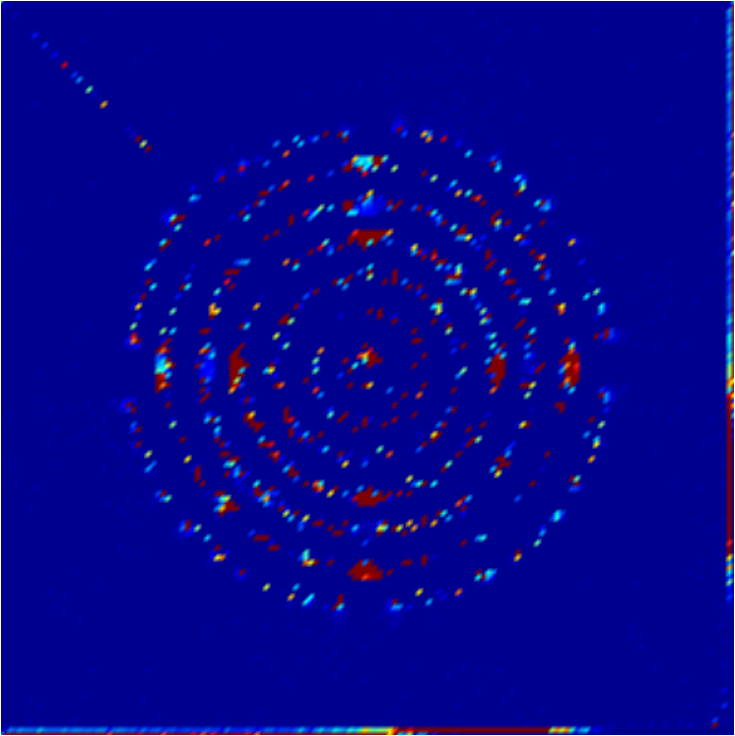}&
\includegraphics[height=2cm]{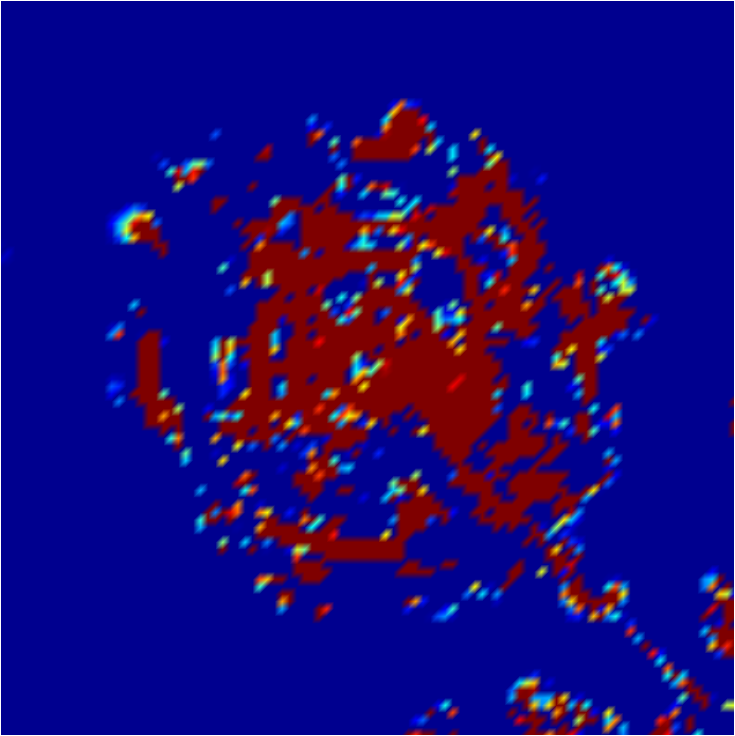}&
\includegraphics[height=2cm]{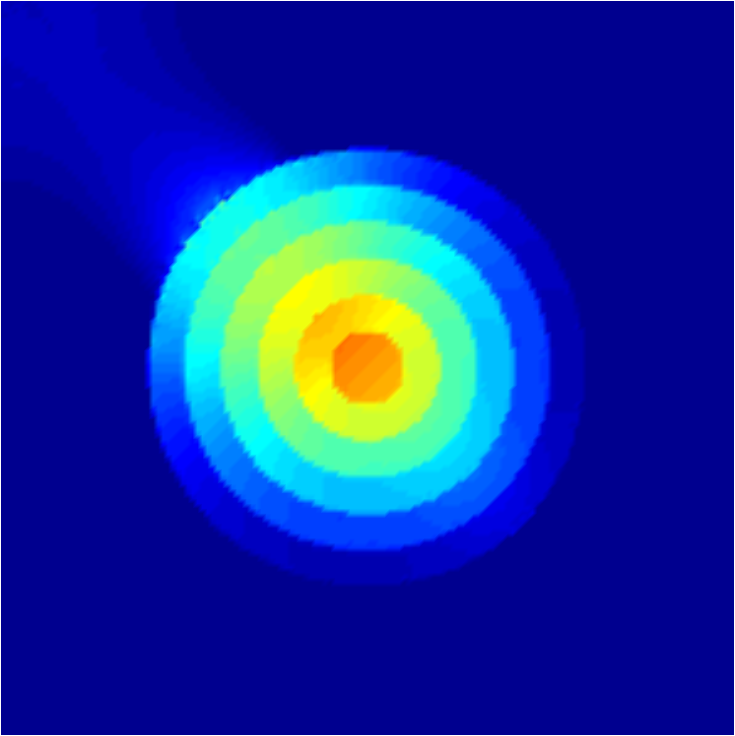}&
\includegraphics[height=2cm]{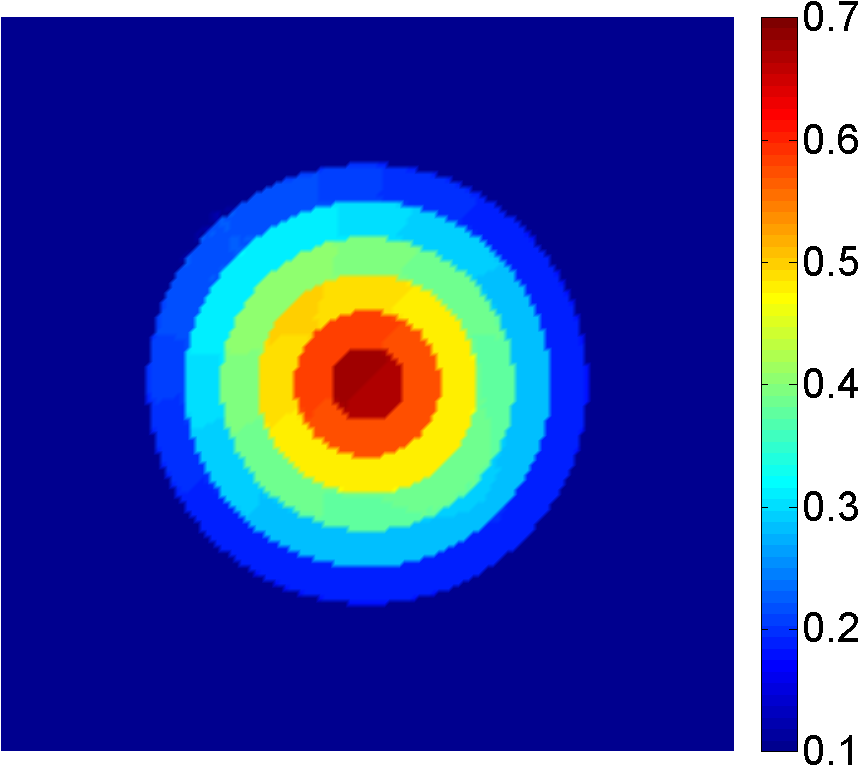}\\
(a) & (b)&(c)&(d)&(e)&(f)
\end{tabular}
\caption{Simulation results for viscoelasticity image reconstruction. First, third and fifth rows: images of $\mu$. Second, fourth and sixth rows: images of $\eta_\mu$. (a) True images; (c) direct inversion method; (e) hybrid one-step method; (b), (d) and (f) are reconstructed images by the adjoint-based optimization method \eref{Eq:iterationscheme} with initial guess of the constant $\mu_0+i\omega\eta_{\mu_0}$, (c) and  (e), respectively.}
\label{Fig-reconstruction-model123}
\end{figure}

We also numerically evaluate the local reconstruction method proposed in section \ref{Section-local}. We consider the rectangular domain, $\Omega$, which is equally divided into four parts: top-left, top-right, bottom-left and bottom-right. It is assumed that the top-right part is contaminated by noise or defected data. For numerical simplicity, we add  3\% white noise to the measured data in the top-right part. The reconstruction results in both the whole domain  and the local domains  are shown in  figure \ref{Fig-local} where (a) is the true distribution of shear viscoelasticity, (b) the initial guess with hybrid method, (c) the reconstruction in whole domain using proposed method,
(d) the local reconstruction.
\begin{figure}[!h]
\centering
\setlength{\tabcolsep}{2pt}
\begin{tabular}{ccccc}
\includegraphics[height=2.5cm]{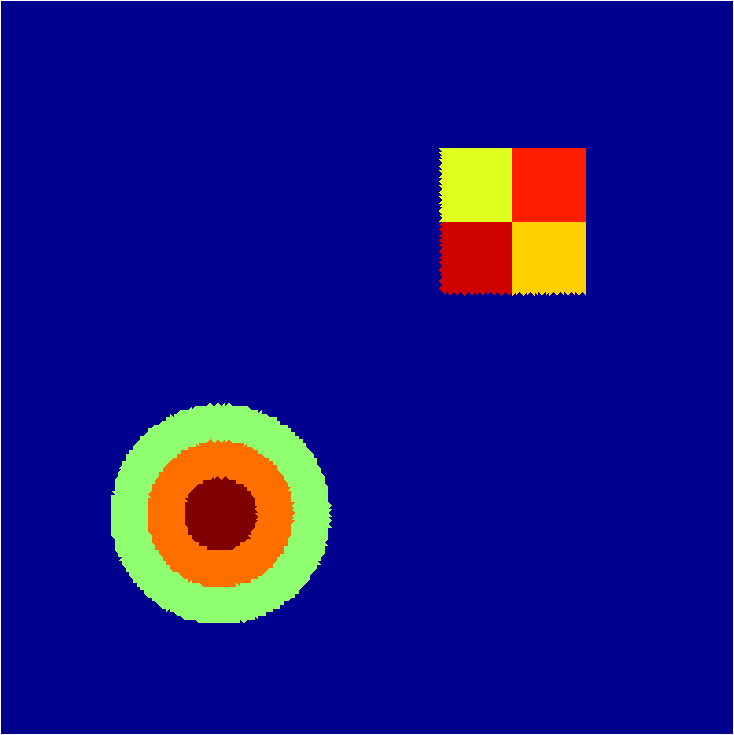}&
\includegraphics[height=2.5cm]{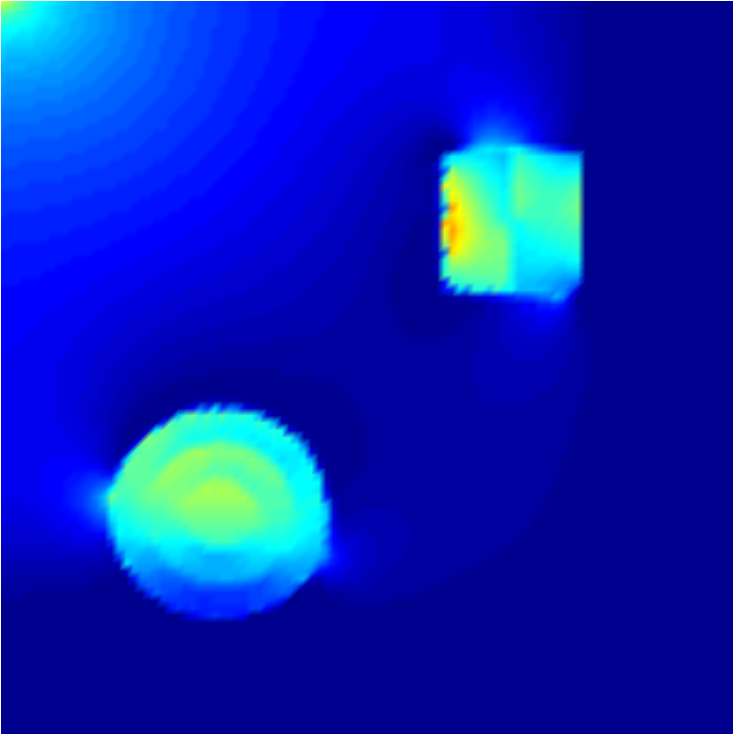}&
\includegraphics[height=2.5cm]{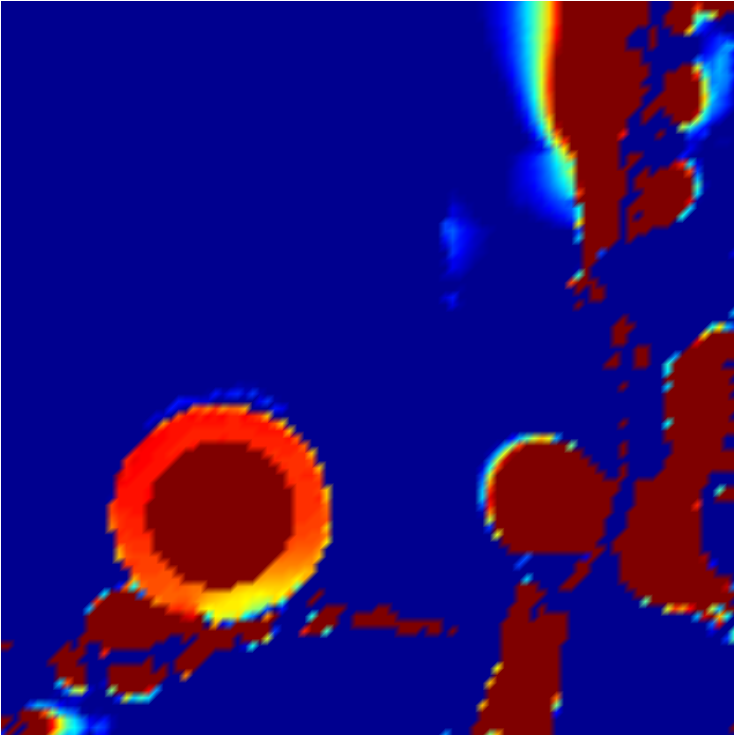}&
\includegraphics[height=2.5cm]{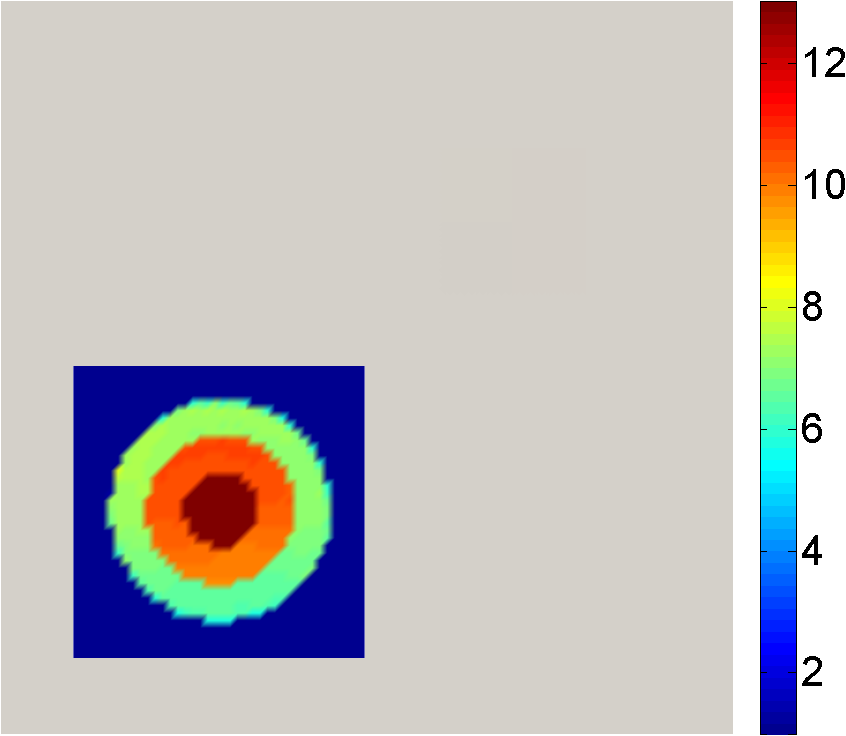}\\
\includegraphics[height=2.5cm]{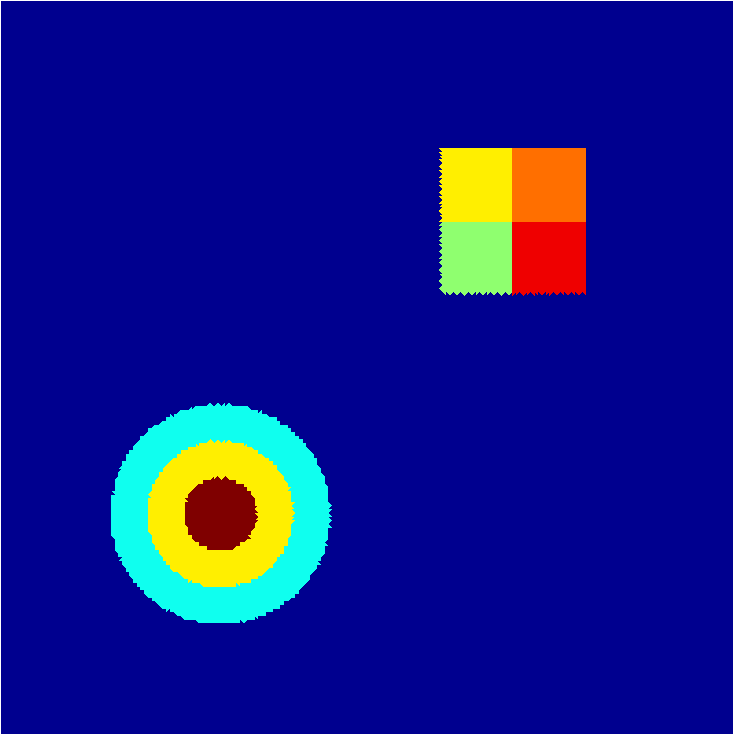}&
\includegraphics[height=2.5cm]{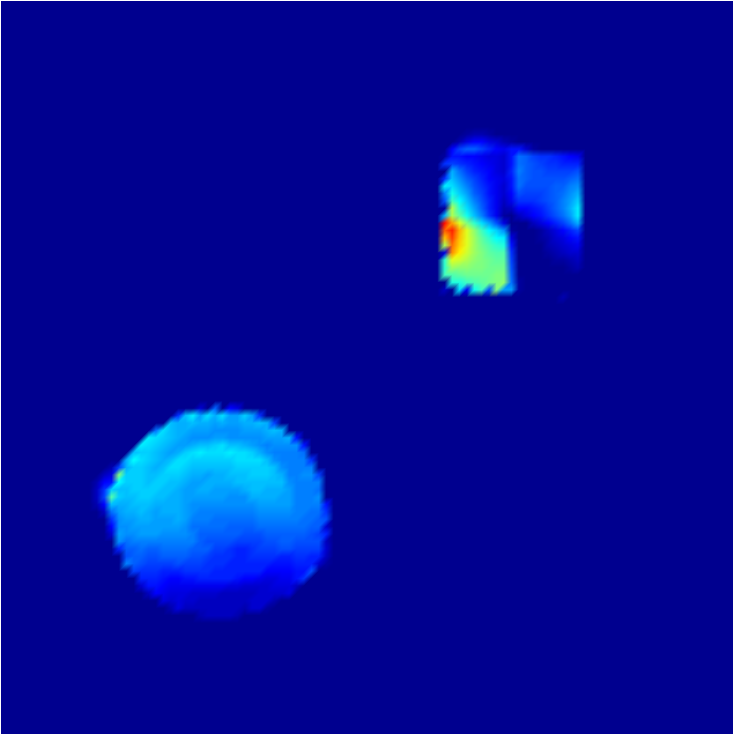}&
\includegraphics[height=2.5cm]{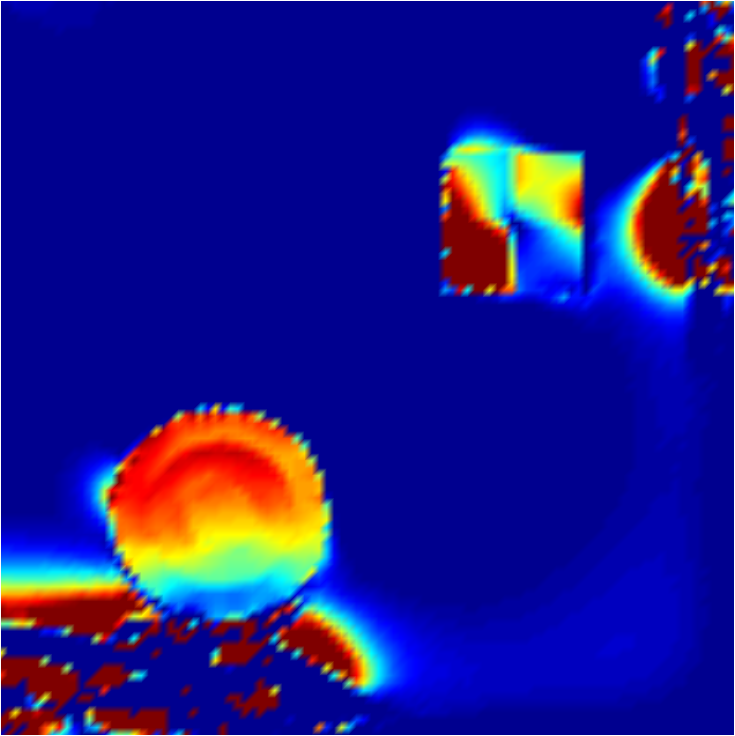}&
\includegraphics[height=2.5cm]{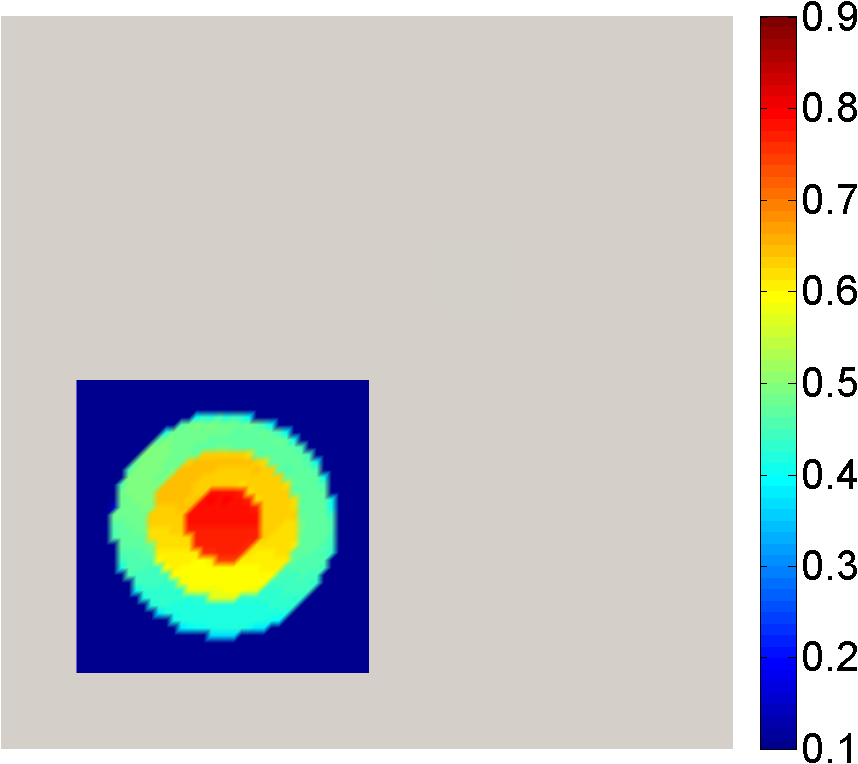}\\
(a)&(b)&(c)&(d)
\end{tabular}
\caption{Simulation results for local reconstruction. First row: images of $\mu$. Second row: images of $\eta_\mu$. (a)  true image; (b) initial guess; (c) adjoint-based optimization method; (d) local reconstruction. }
\label{Fig-local}
\end{figure}

\section{Conclusion}
In this paper, we propose a reconstruction algorithm for shear elasticity and shear viscosity in a viscoelastic tissue. Our optimization-based approach involves introducing an adjoint problem to avoid taking any derivative of the measured time-harmonic internal data. The proposed initial guess formula is particularly  suitable for imaging viscoelastic inclusions. The local convergence of the developed optimal control approach is an open problem. The recent stability results in \cite{otmar} may be helpful in solving this difficult question. It would be also very interesting to generalize the proposed method for imaging anisotropic viscoelastic media. Another challenging problem is to recognize the disease state in tissue from multifrequency elastographic measurements. These important problems will be the subject of future work.

\section*{Acknowledgements}

Ammari was supported  by the ERC Advanced Grant Project MULTIMOD--267184. Seo and Zhou were supported by the National Research Foundation of Korea (NRF) grant funded by the Korean government (MEST) (No. 2011-0028868, 2012R1A2A1A03670512).

\section*{Reference}

\end{document}